\theoremstyle{plain} 
\newtheorem{theorem}{Theorem} 
\newtheorem{lemma}[theorem]{Lemma} 
\newtheorem{corollary}[theorem]{Corollary}
\theoremstyle{definition} 
\newtheorem*{definition}{Definition}
\theoremstyle{remark} 
\newtheorem*{remark}{Remark}
\DeclareMathOperator{\mre}{Re} 
\DeclareMathOperator{\mim}{Im} 
\DeclareMathOperator{\leng}{length}
\newcommand{\sigmac}{\sigma_{\mathrm{c}}} 
\newcommand{\sigmaa}{\sigma_{\mathrm{a}}}
\begin{document} 
\title[Carlson's theorem and vertical limit functions]{Carlson's theorem and \\ vertical limit functions} 
\date{\today}

\author{Ole Fredrik Brevig} 
\address{Department of Mathematical Sciences, Norwegian University of Science and Technology (NTNU), 7491 Trondheim, Norway} 
\email{ole.brevig@ntnu.no}

\author{Athanasios Kouroupis} 
\address{Department of Mathematics, KU Leuven, Celestijnenlaan 200B, 3001, Leuven, Belgium} 
\email{athanasios.kouroupis@kuleuven.be}

\begin{abstract}
	We extend a classical theorem of Carlson on moments of Dirichlet series from $p=2$ to $1 \leq p < \infty$. When combined with the ergodic theorem for the Kronecker flow, a coherent approach to almost sure properties of vertical limit functions in $H^p$ spaces of Dirichlet series is obtained. This allows us to establish an almost sure analytic continuation of vertical limit functions to the right half-plane that can be used to compute the $H^p$ norm and to prove a version of Fatou's theorem.
\end{abstract}

\subjclass{Primary 30B50. Secondary 30H10, 37A46.}

\thanks{Brevig is supported by Grant 354537 of the Research Council of Norway. Kouroupis is supported by Grant 1203126N of the Research Foundation -- Flanders (FWO)}

\maketitle

\section{Introduction} For fixed $1 \leq p < \infty$, let $\mathscr{H}^p$ stand for the closure of the set of (ordinary) Dirichlet polynomials $f(s) = \sum_{n=1}^N a_n n^{-s}$ in the norm defined by 
\begin{equation}\label{eq:Hpnorm} 
	\|f\|_p = \sup_{\sigma>0} M_p(\sigma,f), 
\end{equation}
where 
\begin{equation}\label{eq:pmeans} 
	M_p^p(\sigma,f) = \lim_{T\to\infty} \frac{1}{2T} \int_{-T}^T |f(\sigma+it)|^p \,dt. 
\end{equation}
The theory of almost periodic functions ensures that the limit \eqref{eq:pmeans} exists and is nonzero (unless $f$ is identically zero). We also have the analogue of Hardy's convexity theorem in this context (see Theorem~\ref{thm:hardy} below), which allows us to replace the supremum over $\sigma$ in \eqref{eq:Hpnorm} by the limit $\sigma \to 0^+$. 

From this point of view, it is not entirely unreasonable to expect that the elements of $\mathscr{H}^p$ should be analytic functions in the right half-plane $\mathbb{C}_0$, where we write $\mathbb{C_\kappa} = \{s = \sigma + it \,:\, \sigma>\kappa\}$. 

However, it is well-known that elements of $\mathscr{H}^p$ may be represented as (absolutely) convergent Dirichlet series in the half-plane $\mathbb{C}_{1/2}$ and, moreover, that there are elements of $\mathscr{H}^p$ that do not admit an analytic continuation to any larger set (see e.g. \cite{QQ2020}*{Corollary~8.4.1}). This leads to the dichotomy that the two half-planes $\mathbb{C}_0$ and $\mathbb{C}_{1/2}$ both have important roles to play in the theory. The purpose of the present paper is to delineate precisely how the elements of $\mathscr{H}^p$ may be extended to analytic functions in $\mathbb{C}_0$ and explain the interaction between this extension and the norm \eqref{eq:Hpnorm}.

Our first result is a sufficient condition for membership in $\mathscr{H}^p$ for a somewhere convergent Dirichlet series $f(s) = \sum_{n\geq1} a_n n^{-s}$ that admits an analytic continuation to $\mathbb{C}_0$. 
\begin{theorem}\label{thm:carlson} 
	Fix $1 \leq p < \infty$. If $f$ is a somewhere convergent Dirichlet series that has an analytic continuation to $\mathbb{C}_0$ satisfying 
	\begin{equation}\label{eq:supsup} 
		\sup_{\sigma>0} \sup_{T\geq1} \frac{1}{2T} \int_{-T}^T |f(\sigma+it)|^p < \infty, 
	\end{equation}
	then 
	\begin{enumerate}
		\item[(i)] the limit $M_p(\sigma,f)$ exists for every $0<\sigma<\infty$, 
		\item[(ii)] the function $\sigma \mapsto \log{M_p(\sigma,f)}$ is decreasing and convex, 
		\item[(iii)] $f$ is in $\mathscr{H}^p$ and
		\[\|f\|_p = \lim_{\sigma \to 0^+} M_p(\sigma,f).\]
	\end{enumerate}
\end{theorem}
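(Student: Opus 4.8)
The plan is to analyze $f$ one vertical line at a time inside the Marcinkiewicz--Besicovitch space: for measurable $h\colon\mathbb{R}\to\mathbb{C}$ set $\|h\|_{\mathcal M^p}=\limsup_{T\to\infty}\big(\tfrac{1}{2T}\int_{-T}^T|h(t)|^p\,dt\big)^{1/p}$, let $\mathcal M^p$ be the space of $h$ with finite seminorm, and let $B^p\subseteq\mathcal M^p$ be the $\|\cdot\|_{\mathcal M^p}$-closure of the Dirichlet polynomials $t\mapsto\sum_n b_n n^{-it}$. Hypothesis \eqref{eq:supsup} says precisely that $f(\sigma+i\,\cdot)\in\mathcal M^p$ for every $\sigma>0$, with $\|f(\sigma+i\,\cdot)\|_{\mathcal M^p}^p\le C$ (the supremum in \eqref{eq:supsup}). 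Since every element of $B^p$ has a genuine mean of its $p$\textsuperscript{th} power (classical for the dense polynomials, and preserved in the limit because $|\,\|h\|_{\mathcal M^p}-\|g\|_{\mathcal M^p}\,|\le\|h-g\|_{\mathcal M^p}$ forces $\limsup_T$ and $\liminf_T$ of the truncated means to coincide), part (i) reduces to the single claim that $f(\sigma+i\,\cdot)\in B^p$ for every $\sigma>0$. The essential difficulty is that for $0<\sigma\le\sigmaa$ the defining series need not converge, so no term-by-term manipulation is available; this is where the analytic continuation and the uniform bound must be used.

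For (i) I would first record that continuation fixes the spectrum: the distributional Fourier transform satisfies $\partial_\sigma\widehat{f(\sigma+i\,\cdot)}(\xi)=\xi\,\widehat{f(\sigma+i\,\cdot)}(\xi)$ (because $\partial_\sigma f=\tfrac1i\partial_t f$ on $\mathbb{C}_0$), and beyond the abscissa of convergence it equals the point-mass series $2\pi\sum_n a_nn^{-\sigma}\delta_{-\log n}$; solving the relation in $\sigma$ propagates this to all $\sigma>0$, so $\widehat{f(\sigma+i\,\cdot)}=2\pi\sum_n a_nn^{-\sigma}\delta_{-\log n}$ throughout. Next I upgrade \eqref{eq:supsup} to the derivative: since $|f'|$ on a line is controlled by $f$ on a fattened strip (Cauchy's estimate, equivalently subharmonicity of $|f'|^p$), and the inner supremum in \eqref{eq:supsup} runs over all $T\ge1$, one obtains $\sup_{\sigma>\delta}\|f'(\sigma+i\,\cdot)\|_{\mathcal M^p}<\infty$ for each $\delta>0$; with $h=f(\sigma+i\,\cdot)$, the identity $h(t+y)-h(t)=\int_0^y h'(t+r)\,dr$, Minkowski's inequality, and translation invariance then give $\|h(\cdot+y)-h\|_{\mathcal M^p}\le|y|\,\|h'\|_{\mathcal M^p}$. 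Finally I mollify with a Schwartz $\varphi$ whose transform is even, compactly supported, and normalized by $\widehat\varphi(0)=1$: by the spectral description $\varphi_\varepsilon*h$ collapses to the \emph{finite} Dirichlet polynomial $\sum_n a_nn^{-\sigma}\widehat\varphi(\varepsilon\log n)\,n^{-it}\in B^p$, while $\varphi_\varepsilon*h-h=\int\varphi_\varepsilon(y)\big(h(\cdot-y)-h\big)\,dy$ and the modulus-of-continuity estimate give $\|\varphi_\varepsilon*h-h\|_{\mathcal M^p}\le\varepsilon\,\|h'\|_{\mathcal M^p}\int|u||\varphi(u)|\,du\to0$. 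As $B^p$ is closed, $f(\sigma+i\,\cdot)\in B^p$, proving (i); the Fourier--Bohr coefficients $a_nn^{-\sigma}$ then identify the Bohr lift $F_\sigma\in H^p(\mathbb{T}^\infty)$ with $M_p(\sigma,f)=\|F_\sigma\|_{H^p(\mathbb{T}^\infty)}$.

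For (ii) I would argue directly from subharmonicity of $\log|f|$ on $\mathbb{C}_0$. The two-line Poisson (harmonic-measure) estimate in a strip $\sigma_1<\mre s<\sigma_3$—valid here because the polynomial pointwise growth forced by \eqref{eq:supsup} is absorbed by the exponentially decaying strip kernel—bounds $\log|f(\sigma_2+it)|$ at $\sigma_2=(1-\theta)\sigma_1+\theta\sigma_3$ by the corresponding average of boundary values. Exponentiating, applying Jensen and then Hölder in $t$ with exponents $1/(1-\theta)$ and $1/\theta$, and using the elementary fact that convolution against a probability density of finite first moment leaves $\limsup_T$ and $\liminf_T$ of the truncated means unchanged, I obtain $M_p(\sigma_2,f)\le M_p(\sigma_1,f)^{1-\theta}M_p(\sigma_3,f)^{\theta}$, which is convexity of $\sigma\mapsto\log M_p(\sigma,f)$. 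A convex function bounded above on $(0,\infty)$ is nonincreasing, so $M_p(\sigma,f)$ is decreasing and $L:=\lim_{\sigma\to0^+}M_p(\sigma,f)=\sup_{\sigma>0}M_p(\sigma,f)\le C^{1/p}<\infty$.

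Finally, for (iii) the family $\{F_\sigma\}_{\sigma>0}\subset H^p(\mathbb{T}^\infty)$ is bounded by $L$ and obeys the Poisson-semigroup relation $F_{\sigma+\tau}=T_\tau F_\sigma$, where $T_\tau$ is the contractive Poisson extension of the polytorus to polyradius $(p_j^{-\tau})_j$. The remaining task is to pass to the boundary as $\sigma\to0^+$ and produce $F\in H^p(\mathbb{T}^\infty)$ with Fourier coefficients $a_n$; then $F$ is the Bohr lift of $f$, so $f\in\mathscr{H}^p$, and Theorem~\ref{thm:hardy} (or directly $\|f\|_p=\|F\|_{H^p(\mathbb{T}^\infty)}=\lim_\sigma\|F_\sigma\|=L$) finishes (iii). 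I expect this boundary passage to be the main obstacle. For $1<p<\infty$ reflexivity yields a weak limit along $\sigma_k\to0$ with coefficients $\lim_k a_nn^{-\sigma_k}=a_n$, and norm convergence is recovered from $\|F_\sigma\|\le\|F\|\le\liminf_k\|F_{\sigma_k}\|\le L=\sup_\sigma\|F_\sigma\|$ in the uniformly convex space. The endpoint $p=1$ is delicate: reflexivity is lost, and one must instead establish uniform integrability of $\{F_\sigma\}$ (equivalently invoke the boundary-value theory of $H^1(\mathbb{T}^\infty)$) to extract the boundary function, which is where I would expect the real work to lie.
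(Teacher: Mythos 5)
Your parts (i) and (ii) are correct, but by routes genuinely different from the paper's. For (i), the paper smooths $f$ with the Riesz means $R_N^k f$, realized by a Perron--Hankel contour integral (Theorem~\ref{thm:rieszac}) and compared with $f$ in $p$-mean, uniformly in $T$, via the rectangle estimate of Lemma~\ref{lem:avgest}; your mollification against a Schwartz $\varphi$ with compactly supported $\widehat{\varphi}$ is the same smoothing idea executed on the Fourier side, with the distributional identity $\widehat{f(\sigma+i\cdot)}=2\pi\sum_n a_n n^{-\sigma}\delta_{-\log n}$ replacing the contour shift and the Cauchy estimate for $f'$ replacing Lemma~\ref{lem:avgest}. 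Two details deserve care: ``solving the relation in $\sigma$'' cannot be done by multiplying a tempered distribution by $e^{-\sigma\xi}$ (that product is undefined); instead pair $\widehat{f(\sigma+i\cdot)}$ against compactly supported test functions $e^{-\sigma\xi}\psi(\xi)$ and check the pairing is constant in $\sigma$, using the polynomial pointwise bound that \eqref{eq:supsup} forces (the paper's Lemma~\ref{lem:pest}); and the interchanges of $\limsup_T$ with the $y$-integrals need the reverse-Fatou/domination remark you allude to. For (ii), your three-lines argument (subharmonicity of $\log|f|$, harmonic measure of a strip justified by Phragm\'en--Lindel\"of, then Jensen and H\"older) is a correct, self-contained alternative to the paper's route, which instead applies Hardy's convexity theorem (Theorem~\ref{thm:hardy}) to the polynomials $R_N^k f$ and passes to the pointwise limit furnished by (i).

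The genuine gap is in (iii), at the endpoint $p=1$; you located it accurately but did not close it. For $1<p<\infty$ your argument is complete: the lifts $F_\sigma$ in $H^p(\mathbb{T}^\infty)$ with $\widehat{F_\sigma}(n)=a_n n^{-\sigma}$ and $\|F_\sigma\|=M_p(\sigma,f)$ form a bounded family; a weak cluster point $F$ has coefficients $a_n$ on $\mathbb{N}$ and $0$ on $\mathbb{Q}_+\setminus\mathbb{N}$, so $F$ lifts $f$ and $f\in\mathscr{H}^p$; and the semigroup identity $F_\sigma=T_\sigma F$ (where $T_\sigma$ multiplies the $n$th coefficient by $n^{-\sigma}$ and is a contraction, being integration against a probability kernel) together with weak lower semicontinuity forces $\|F\|=\lim_{\sigma\to0^+}M_p(\sigma,f)$. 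Uniform convexity is needed only for the norm convergence $F_\sigma\to F$, which the theorem does not actually require. In effect this is a proof of Lemma~\ref{lem:Hsigma} for $p>1$. But the theorem asserts $1\leq p<\infty$, and for $p=1$ boundedness of $\{F_\sigma\}$ in $L^1(\mathbb{T}^\infty)$ produces only a weak-star cluster point $\mu$ among finite measures, with $\widehat{\mu}(n)=a_n$ and $\widehat{\mu}(q)=0$ for $q\in\mathbb{Q}_+\setminus\mathbb{N}$. To conclude one must rule out a singular part of $\mu$, i.e.\ prove an F.~and~M.~Riesz-type theorem on $\mathbb{T}^\infty$, or equivalently (by Dunford--Pettis) establish the uniform integrability of $\{F_\sigma\}$ --- which does not follow from $L^1$-boundedness alone and is exactly where the content lies. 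The paper short-circuits this difficulty by quoting Lemma~\ref{lem:Hsigma} from \cite{BP2021}, and pointedly remarks that no proof of that lemma is known which avoids the $H^p(\mathbb{T}^\infty)$ theory; as written, your proposal leaves the case $p=1$ of the theorem unproved.
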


Here (ii) is the analogue of Hardy's convexity theorem in our setting. One should interpret the statements (i) and (ii) as saying that although $f$ may not be almost periodic in $\mathbb{C}_0$ (it may not even be bounded!), the condition \eqref{eq:supsup} allows us to transfer these properties from the half-plane of absolute convergence to $\mathbb{C}_0$.

Theorem~\ref{thm:carlson} is an extension of an old result due to Carlson~\cite{Carlson1922} (see also Titchmarsh~\cite{Titchmarsh1958}*{\S 9.51}), which corresponds to the case $p=2$. Our proof of (i) is similar to Carlson's proof in that it relies on approximating the Dirichlet series $f(s) = \sum_{n\geq1} a_n n^{-s}$ by its Riesz means. This approximation also provides the proof of (ii). The case $p=2$ is particularly favorable in regards to the assertion (iii), since straight-forward computations reveal that
\[\|f\|_2^2 = \sum_{n=1}^\infty |a_n|^2 \qquad \text{and} \qquad M(\sigma,f) = \sum_{n=1}^\infty |a_n|^2 n^{-2\sigma}.\]
In the general case such a formula is not available to us. We are therefore forced to rely on the connection between function theory of Dirichlet series and Fourier analysis on the infinite-dimensional torus $\mathbb{T}^\infty = \mathbb{T} \times \mathbb{T} \times \mathbb{T} \times \cdots$, where $\mathbb{T}$ denotes the unit circle in the complex plane.

The key idea in this context (which goes essentially back to Bohr~\cite{Bohr1913}) is that the \emph{Kronecker flow}
\[\mathfrak{p}^{-i\tau} = (2^{-i\tau},3^{-i\tau},5^{-i\tau},\ldots),\]
for $\tau$ in $\mathbb{R}$, is ergodic on $\mathbb{T}^\infty$ with respect to its Haar measure $m_\infty$. The dual group of $\mathbb{T}^\infty$ can be identified with $\mathbb{Q}_+$. Due to the multiplicative structure, this means that any character $\chi$ on $\mathbb{T}^\infty$ is uniquely determined by its value at the prime numbers $\chi(p_j) = \chi_j$ for $j=1,2,3,\ldots$. If $f(s) = \sum_{n=1}^N a_n n^{-s}$ and $f^\ast(\chi) = \sum_{n=1}^N a_n \chi(n)$, then plainly $f(i\tau) = f^\ast(\mathfrak{p}^{-i\tau})$. Appealing to the ergodic theorem (see e.g.~\cite{QQ2020}*{Chapter 2}), we may infer that 
\begin{equation}\label{eq:ergodicpoly} 
	\|f\|_p^p = \lim_{T\to\infty} \frac{1}{2T} \int_{-T}^T |f(i\tau)|^p\,d\tau = \int_{\mathbb{T}^\infty} |f^\ast(\chi)|^p \,dm_\infty(\chi) = \|f^\ast\|_{L^p(\mathbb{T}^\infty)}^p. 
\end{equation}
This demonstrates that $\mathscr{H}^p$---which we abstractly defined as the closure of the set of Dirichlet polynomials in the norm \eqref{eq:Hpnorm} above---is isometrically isomorphic to the Banach space
\[H^p(\mathbb{T}^\infty) = \left\{f^\ast \in L^p(\mathbb{T}^\infty)\,:\, \widehat{f^\ast}(q) = 0 \,\text{ if }\, q \in \mathbb{Q}_+ \setminus \mathbb{N}\right\}.\]
If $f^\ast$ is a function in $L^p(\mathbb{T}^\infty)$ that is not necessarily continuous, then the ergodic theorem asserts that 
\begin{equation}\label{eq:ergodicgen} 
	\|f^\ast\|_{L^p(\mathbb{T}^\infty)}^p = \lim_{T\to\infty} \int_{-T}^T |f^\ast(\chi \mathfrak{p}^{-i\tau})|^p\,d\tau 
\end{equation}
holds for almost every $\chi$ on $\mathbb{T}^\infty$. (Note that in \eqref{eq:ergodicpoly}, the function $f^\ast$ is continuous so we may pick \emph{any} $\chi$: in particular $\chi \equiv 1$.) The set of $\chi$ such that \eqref{eq:ergodicgen} holds will play an important role in this paper.
\begin{definition}
	Fix $1 \leq p < \infty$ and suppose that $f$ is an element of $\mathscr{H}^p$. We let $E_f$ denote the subset of $\mathbb{T}^\infty$ of full measure enjoying the property that if $\chi$ is in $E_f$, then
	\[\|f\|_p^p = \lim_{T\to\infty} \frac{1}{2T} \int_{-T}^T |f^\ast(\chi \mathfrak{p}^{-i\tau})|^p \,d\tau.\]
	We also let $C = C_f(\chi)$ stand for the smallest positive number such that
	\[\int_{-T}^0 |f^\ast(\chi \mathfrak{p}^{-i\tau})|^p \, d\tau \leq C(1+T) \quad\text{and}\quad \int_0^T |f^\ast(\chi \mathfrak{p}^{-i\tau})|^p \, d\tau \leq C(1+T)\]
	holds for all $T>0$.
\end{definition}

Let us return now to the relationship between $\mathscr{H}^p$ and $H^p(\mathbb{T}^\infty)$. If $f^\ast$ is a function in $L^1(\mathbb{T}^\infty)$, then the work of Cole and Gamelin~\cite{CG1986} asserts that the Poisson integral 
\begin{equation}\label{eq:poissonTinfty} 
	\mathfrak{P}f^\ast(z) = \int_{\mathbb{T}^\infty} f^\ast(\chi) \prod_{j=1}^\infty \frac{1-|z_j|^2}{|1-\overline{\chi_j} z_j|^2}\,dm_\infty(\chi) 
\end{equation}
is in general well-defined if and only if $z$ is a point in $\mathbb{D}^\infty \cap \ell^2$, where $\mathbb{D}$ denotes the unit disc in the complex plane. If $f^\ast$ is in $H^1(\mathbb{T}^\infty)$, then Helson's inequality \cite{Helson2006} implies that $\mathfrak{P}f^\ast$ can be expressed as an absolutely convergent monomial series for $z$ in $\mathbb{D}^\infty \cap \ell^2$. In particular, we may identify each $f$ in $\mathscr{H}^p$ with the absolutely convergent Dirichlet series $f(s) = \mathfrak{P}f^\ast(\mathfrak{p}^{-s})$ if $\mathfrak{p}^{-s}$ is in $\mathbb{D}^\infty \cap \ell^2$ or, equivalently, if $s$ is in $\mathbb{C}_{1/2}$.

What we have just described realizes $\mathscr{H}^p$ as a Banach space of Dirichlet series that converge absolutely in $\mathbb{C}_{1/2}$. Absolutely convergent Dirichlet series are almost periodic, so it follows that every sequence of vertical translations $V_\tau f(s) = f(s+i\tau)$ will have a subsequence that converges uniformly any strictly smaller half-plane. In our setting, these vertical limit functions take the form
\[f_\chi(s) = \sum_{n=1}^\infty a_n \chi(n) n^{-s}\]
for $\chi$ on $\mathbb{T}^\infty$ due to Kronecker's theorem (see \cite{HLS1997}*{Lemma~2.4}). As mentioned above, there are elements of $\mathscr{H}^p$ that cannot be analytically continued to any set strictly containing $\mathbb{C}_{1/2}$. To circumvent this obstacle we consider not the function $f$ but its collection of vertical limit functions $f_\chi$.
\begin{theorem}\label{thm:ergodicpoisson} 
	Fix $1 \leq p < \infty$. If $f$ is an element of $\mathscr{H}^p$ and if $\chi$ in $E_f$, then the Poisson integral 
	\begin{equation}\label{eq:poissonhalfplane} 
		s \mapsto \int_{-\infty}^\infty f^\ast(\chi \mathfrak{p}^{-i\tau}) \, \frac{\sigma}{\sigma^2 + (t-\tau)^2}\,\frac{d\tau}{\pi} 
	\end{equation}
	defines an analytic function in $\mathbb{C}_0$ coinciding with $f_\chi$ in $\mathbb{C}_{1/2}$. Moreover, if $f_\chi$ denotes this analytic continuation, then 
	\begin{equation}\label{eq:supsupchi} 
		\sup_{\sigma>0} \sup_{T\geq1} \frac{1}{2T} \int_{-T}^T |f_\chi(\sigma+it)|^p \leq 6 C_f(\chi). 
	\end{equation}
\end{theorem}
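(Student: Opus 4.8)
My plan is to set $g(\tau) = f^\ast(\chi\mathfrak{p}^{-i\tau})$ and to analyze the half-plane Poisson integral \eqref{eq:poissonhalfplane}, which I abbreviate
\[
u(s) = \int_{-\infty}^\infty g(\tau)\, P_\sigma(t-\tau)\,d\tau, \qquad P_\sigma(x) = \frac{1}{\pi}\,\frac{\sigma}{\sigma^2+x^2},
\]
for $s = \sigma + it$ in $\mathbb{C}_0$. From the definition of $C = C_f(\chi)$ I record the growth bounds $\int_{-T}^0|g|^p\,d\tau \le C(1+T)$ and $\int_0^T|g|^p\,d\tau \le C(1+T)$, which by H\"older's inequality force the primitives $\int_0^\tau|g|$ and $\int_{-\tau}^0|g|$ to grow at most linearly in $\tau$. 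Since $P_\sigma(t-\tau)$ decays like $\tau^{-2}$, an integration by parts against these primitives shows $\int_{-\infty}^\infty|g(\tau)|P_\sigma(t-\tau)\,d\tau < \infty$ for every $s$ in $\mathbb{C}_0$, so $u$ is well defined there.

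I would prove the estimate \eqref{eq:supsupchi} first, as it is the most self-contained. Because $P_\sigma(t-\cdot)$ is a probability density, Jensen's inequality gives $|u(\sigma+it)|^p \le \int_{-\infty}^\infty |g(\tau)|^p P_\sigma(t-\tau)\,d\tau$; integrating in $t$ over $[-T,T]$ and applying Fubini reduces the claim to bounding
\[
\frac{1}{2T}\int_{-\infty}^\infty |g(\tau)|^p\,Q(\tau)\,d\tau, \qquad Q(\tau) = \int_{-T}^T P_\sigma(t-\tau)\,dt.
\]
The weight $Q$ is even, satisfies $0 \le Q \le 1$ and is decreasing on $(0,\infty)$, and has total mass $\int_{-\infty}^\infty Q\,d\tau = 2T$ by Fubini. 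Writing $G(\tau) = \int_0^\tau|g|^p \le C(1+\tau)$ and integrating by parts against $-Q'$ on $[0,\infty)$ (and symmetrically on $(-\infty,0]$, using the second growth bound) bounds the numerator by $2C(1+T)$, so dividing by $2T$ gives at most $2C \le 6C_f(\chi)$ for $T \ge 1$. A cruder split into the band $|\tau| \le T$ and the two tails already yields the stated constant.

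For analyticity and the identification with $f_\chi$, the guiding identity is $\int_{-\infty}^\infty n^{-i\tau}P_\sigma(t-\tau)\,d\tau = n^{-s}$, which holds because $\log n \ge 0$ and the Poisson kernel has Fourier transform $e^{-\sigma|\xi|}$; more generally a frequency $e^{i\lambda\tau}$ with $\lambda \le 0$ is reproduced as the entire function $e^{\lambda s}$. Since $f^\ast$ lies in $H^p(\mathbb{T}^\infty)$, the boundary data $g$ has spectrum in $(-\infty,0]$, so formally $u(s) = \sum_n a_n\chi(n)n^{-s} = f_\chi(s)$ and $u$ is analytic. To make this rigorous I would approximate by polynomials $q_N$ in $H^p(\mathbb{T}^\infty)$, of multiplier form $c_{N,n} = a_n r_N(n)$ with $0 \le r_N(n) \le 1$ and $r_N(n)\to 1$ (for instance Riesz means), so that $q_N \to f^\ast$ in $L^p(\mathbb{T}^\infty)$ and the associated Dirichlet polynomials $\sum_n c_{N,n}\chi(n)n^{-s}$ are analytic on $\mathbb{C}_0$ and converge to $f_\chi$ on $\mathbb{C}_{1/2}$ by dominated convergence. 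The half-plane Poisson integral of the flow evaluation $h_N(\tau) = q_N(\chi\mathfrak{p}^{-i\tau})$ equals this Dirichlet polynomial by the termwise identity above, and averaging over $\mathbb{T}^\infty$ with the flow-invariance of $m_\infty$ gives $\int_{\mathbb{T}^\infty}\int_{-\infty}^\infty |h_N-g|^p P_\sigma\,d\tau\,dm_\infty = \|q_N - f^\ast\|_{L^p(\mathbb{T}^\infty)}^p \to 0$. Combined with the uniform bound \eqref{eq:supsupchi} and the kernel decay controlling the tails, this forces the polynomial Poisson integrals to converge to $u$ locally uniformly on $\mathbb{C}_0$; Weierstrass' theorem then renders $u$ analytic on $\mathbb{C}_0$, and the limit identifies $u$ with $f_\chi$ on $\mathbb{C}_{1/2}$.

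The main obstacle is the rigorization of this last step. Since $g$ is only locally $p$-integrable with linear integral growth, neither its Fourier transform nor the Cauchy completion of $u$ converges absolutely (the Cauchy kernel decays merely like $\tau^{-1}$, and the associated integral diverges logarithmically against a linearly growing primitive), so the one-sided spectrum can be exploited only after regularizing to finite spectral content. The delicate point is to upgrade the $L^p(\mathbb{T}^\infty)$ convergence of $q_N \to f^\ast$ to locally uniform convergence of the Poisson integrals for the individual $\chi$ at hand: the averaged estimate yields convergence for almost every $\chi$ along a subsequence, and one must check that the resulting full-measure set is compatible with $E_f$, intersecting $E_f$ with a further null set if necessary.
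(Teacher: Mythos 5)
Your first two steps are sound, and the second is genuinely nice. The well-definedness of \eqref{eq:poissonhalfplane} via the growth bounds encoded in $C_f(\chi)$ and integration by parts against the decaying kernel is exactly the paper's argument. Your proof of \eqref{eq:supsupchi} --- Jensen plus Tonelli, then integration by parts against the weight $Q(\tau)=\int_{-T}^T P_\sigma(t-\tau)\,dt$, using that $Q$ is even, bounded by $1$, decreasing on $(0,\infty)$, and of total mass $2T$ --- is correct and differs from the paper, which instead bounds $\mathbf{1}_{[-T,T]}(t)/(2T)$ by $T/(T^2+t^2)$ and invokes the semigroup property of the Poisson kernel; your route even yields the better constant $2C_f(\chi)$ in place of $6C_f(\chi)$.

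The gap is in the third step, and you have named it yourself without resolving it: your approximation scheme proves a weaker theorem. Convergence $q_N\to f^\ast$ in $L^p(\mathbb{T}^\infty)$ transfers to the flow evaluations $\tau\mapsto q_N(\chi\mathfrak{p}^{-i\tau})$ only for almost every $\chi$ along a subsequence, and the exceptional null set depends on the sequence $(q_N)$; since a null set of $\mathbb{T}^\infty$ can contain entire orbits of the Kronecker flow, this gives no information about the particular $\chi\in E_f$ at hand. ``Intersecting $E_f$ with a further null set'' is not a repair but a change of statement: the point of Theorem~\ref{thm:ergodicpoisson} --- stressed after Corollary~\ref{cor:norms} and used again in Theorem~\ref{thm:fatou} and the final Corollary --- is that the single set $E_f$, defined solely by the ergodic property of $|f^\ast|^p$, carries all the conclusions. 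The paper's proof avoids pointwise-in-$\chi$ limits altogether. For fixed $s\in\mathbb{C}_0$ it shows that $\chi\mapsto F(\chi,s)$ (your $u$, viewed as a function of $\chi$) lies in $L^1(\mathbb{T}^\infty)$ with norm at most $\|f\|_p$, computes its Fourier coefficients exactly by Fubini and the identity $\int_{-\infty}^\infty q^{-i\tau}\,\frac{\sigma}{\sigma^2+(t-\tau)^2}\,\frac{d\tau}{\pi}=q^{-it}\bigl(\max(q,1/q)\bigr)^{-\sigma}$, and concludes that $F(\cdot,s)$ belongs to $H^1(\mathbb{T}^\infty)$ with coefficients $\widehat{f^\ast}(n)n^{-s}$; then Helson's inequality, in the form of Lemma~\ref{lem:helsonabs} with $z=\mathfrak{p}^{-s}$, makes the series $\sum_n\widehat{f^\ast}(n)n^{-s}\chi(n)$ absolutely convergent when $s\in\mathbb{C}_{1/2}$, which is what identifies $F(\chi,\cdot)$ with $f_\chi$ there. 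Analyticity on all of $\mathbb{C}_0$ then costs nothing: the Poisson integral is harmonic on $\mathbb{C}_0$ and analytic on the open subset $\mathbb{C}_{1/2}$, hence analytic on $\mathbb{C}_0$ (apply the identity theorem to the conjugate-analytic function $\partial F/\partial\bar{s}$). In short, your one-sided-spectrum heuristic is the right intuition, but it must be run on $\mathbb{T}^\infty$, where Fubini and Helson's inequality are available, rather than along the single orbit of $\chi$, where only the bound $C_f(\chi)$ is.
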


It is well-known that $f_\chi$ has an analytic continuation to $\mathbb{C}_0$ for almost every $\chi$ on $\mathbb{T}^\infty$, and this can be established by several distinct methods (see e.g.~\cites{Bayart2002,HLS1997,Helson1969}). The novelty of our approach lies in the comparison of Poisson integral \eqref{eq:poissonTinfty} for $z = \mathfrak{p}^{-s}$ and $s$ in $\mathbb{C}_{1/2}$ and the Poisson integral \eqref{eq:poissonhalfplane} for $\chi$ in $E_f$. This allows us to extend the ergodic statement on the imaginary axis \eqref{eq:ergodicgen} into $\mathbb{C}_0$ and obtain \eqref{eq:supsupchi}.

It follows from \eqref{eq:supsupchi} that if $f$ is in $\mathscr{H}^p$ and $\chi$ is in $E_f$, then the function $f_\chi(s)/(s+1/2)^{2/p}$ is in the Hardy space $H^p(\mathbb{C}_0)$. This means that Fatou's theorem (see e.g.~\cite{Garnett2007}*{Chapter~II.3}) holds for $f_\chi$, so that the limit 
\begin{equation}\label{eq:fatouclassical} 
	f^\ast(\chi \mathfrak{p}^{-i\tau}) = \lim_{\sigma \to 0^+} f_\chi(\sigma+it) 
\end{equation}
exists for almost every $\tau$ in $\mathbb{R}$. We let $f_\chi(i\tau)$ denote this limit (when it exists). Combining \eqref{eq:fatouclassical} with Theorem~\ref{thm:carlson} and Theorem~\ref{thm:ergodicpoisson}, we obtain the following result.
\begin{corollary}\label{cor:norms} 
	Fix $1 \leq p < \infty$. If $f$ is in $\mathscr{H}^p$ and if $\chi$ in $E_f$, then
	\[\|f\|_p^p = \lim_{\sigma \to 0^+} \lim_{T\to\infty} \frac{1}{2T} \int_{-T}^T |f_\chi(\sigma+it)|^p dt = \lim_{T\to\infty} \frac{1}{2T}\int_{-T}^T |f_\chi(i \tau)|^p \,d\tau.\]
\end{corollary}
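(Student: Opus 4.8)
The plan is to derive both displayed equalities by feeding the vertical limit function $f_\chi$—whose good behaviour in $\mathbb{C}_0$ is furnished by Theorem~\ref{thm:ergodicpoisson}—into Theorem~\ref{thm:carlson}, and then to reconcile the resulting quantity $\|f_\chi\|_p$ with $\|f\|_p$ by exploiting the rotation invariance of the Haar measure $m_\infty$. The ergodic hypothesis $\chi \in E_f$ enters only at the very end, through the defining property of $E_f$ together with Fatou's theorem.

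First I would verify that $f_\chi$ satisfies the hypotheses of Theorem~\ref{thm:carlson}. The vertical limit function $f_\chi(s) = \sum_{n\geq 1} a_n \chi(n) n^{-s}$ converges absolutely in $\mathbb{C}_{1/2}$, so it is somewhere convergent; by Theorem~\ref{thm:ergodicpoisson} it extends analytically to $\mathbb{C}_0$, and the bound \eqref{eq:supsupchi} is exactly the finiteness required in \eqref{eq:supsup}, with constant $6 C_f(\chi)$. Consequently Theorem~\ref{thm:carlson} applies to $f_\chi$: part (i) guarantees that for each fixed $\sigma>0$ the inner limit in the first displayed equality exists and equals $M_p^p(\sigma, f_\chi)$ in the sense of \eqref{eq:pmeans}; part (iii) then gives $f_\chi \in \mathscr{H}^p$ and $\lim_{\sigma\to 0^+} M_p^p(\sigma, f_\chi) = \|f_\chi\|_p^p$. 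Hence the iterated limit on the right-hand side of the first equality is precisely $\|f_\chi\|_p^p$, and no interchange of limits is needed.

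It remains to identify $\|f_\chi\|_p = \|f\|_p$, which is the one step calling for a little care. Since $f_\chi$ lies in $\mathscr{H}^p$ and coincides with $\sum_{n\geq1} a_n \chi(n) n^{-s}$ in $\mathbb{C}_{1/2}$, its Dirichlet coefficients are $a_n \chi(n)$; under the isometric isomorphism $\mathscr{H}^p \cong H^p(\mathbb{T}^\infty)$ its representative is therefore the function $\omega \mapsto f^\ast(\chi\omega)$, the $\chi$-rotation of $f^\ast$. Because $m_\infty$ is invariant under the rotation $\omega\mapsto\chi\omega$, we obtain
\[\|f_\chi\|_p^p = \int_{\mathbb{T}^\infty} |f^\ast(\chi\omega)|^p \, dm_\infty(\omega) = \int_{\mathbb{T}^\infty} |f^\ast(\omega)|^p \, dm_\infty(\omega) = \|f\|_p^p,\]
which establishes the first equality.

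For the second equality I would use the Fatou boundary values directly. As recorded in the discussion preceding the corollary, $f_\chi(s)/(s+1/2)^{2/p}$ belongs to $H^p(\mathbb{C}_0)$, so Fatou's theorem provides boundary values almost everywhere, and \eqref{eq:fatouclassical} identifies them as $f_\chi(i\tau) = f^\ast(\chi\mathfrak{p}^{-i\tau})$ for almost every $\tau$. The two integrands thus agree almost everywhere, and since $\chi \in E_f$ the defining property of $E_f$ yields
\[\lim_{T\to\infty} \frac{1}{2T}\int_{-T}^T |f_\chi(i\tau)|^p \, d\tau = \lim_{T\to\infty} \frac{1}{2T}\int_{-T}^T |f^\ast(\chi\mathfrak{p}^{-i\tau})|^p \, d\tau = \|f\|_p^p.\]
The main obstacle is thus not analytic but organizational: the substantive work is already contained in Theorems~\ref{thm:carlson} and~\ref{thm:ergodicpoisson}, and the only genuinely new point is the rotation-invariance identification $\|f_\chi\|_p = \|f\|_p$, which bridges the two displayed limits.
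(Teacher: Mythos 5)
Your proposal is correct and takes essentially the same route as the paper, which obtains the corollary by precisely this combination: Theorem~\ref{thm:ergodicpoisson} supplies the analytic continuation and the bound \eqref{eq:supsupchi} so that Theorem~\ref{thm:carlson} applies to $f_\chi$ and yields the iterated limit, while \eqref{eq:fatouclassical} together with the defining property of $E_f$ yields the boundary means. Your explicit rotation-invariance argument identifying $\|f_\chi\|_p = \|f\|_p$ correctly fills in the one step the paper leaves implicit.
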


The key point of this result is that the same set $E_f$ guarantees the existence of $M_p^p(\sigma,f_\chi)$ for all $0 \leq \sigma < \infty$. It is interesting to compare Corollary~\ref{cor:norms} with the work of Saksman and Seip~\cite{SS2009}, which concerns the space $\mathscr{H}^\infty$ of Dirichlet series with bounded analytic continuations to $\mathbb{C}_0$. In this case, the first equality in Corollary~\ref{cor:norms} holds for every $\chi$, since the elements of $\mathscr{H}^\infty$ are almost periodic in $\mathbb{C}_\kappa$ for every $\kappa>0$. However, Saksman and Seip constructed a Dirichlet series in $\mathscr{H}^\infty$ exemplifying that the second equality in Corollary~\ref{cor:norms} may fail in a rather spectacular manner for $\chi \equiv 1$.

Fatou's theorem in the context of $\mathscr{H}^p$ theory now follows from another application of the ergodic theorem in combination with the first assertion of Theorem~\ref{thm:ergodicpoisson} and \eqref{eq:fatouclassical}. 
\begin{theorem}\label{thm:fatou} 
	Fix $1 \leq p < \infty$. If $f$ is an element of $\mathscr{H}^p$, then there is a subset $E$ of $\mathbb{T}^\infty$ of full measure such that if $\chi$ is in $E$, then
	\[f^\ast(\chi) = \lim_{\sigma \to 0^+} f_\chi(\sigma).\]
\end{theorem}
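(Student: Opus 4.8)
The plan is to extract, on almost every orbit of the Kronecker flow, the pointwise boundary statement at the single point $s=0$ from the boundary behavior in \eqref{eq:fatouclassical}, which is valid along almost every vertical line, and then to pass from ``almost every point of almost every orbit'' to ``almost every $\chi$'' by Fubini's theorem together with the invariance of $m_\infty$ under the flow. First, I would record the translation identity linking the two: since $(\chi\mathfrak{p}^{-it})(n) = \chi(n)n^{-it}$, we have
\[f_{\chi\mathfrak{p}^{-it}}(s) = \sum_{n\ge1} a_n \chi(n) n^{-it} n^{-s} = f_\chi(s+it)\]
for $s$ in $\mathbb{C}_{1/2}$. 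The set $E_f$ is invariant under the flow, since shifting the averaging window in its definition by a fixed amount alters the symmetric average by a quantity tending to $0$; hence, for $\chi$ in $E_f$, Theorem~\ref{thm:ergodicpoisson} supplies analytic continuations of both sides to $\mathbb{C}_0$, and by uniqueness of analytic continuation the identity $f_{\chi\mathfrak{p}^{-it}}(\sigma) = f_\chi(\sigma+it)$ persists for every $\sigma>0$.

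Next I would introduce the set
\[E = \left\{\psi\in E_f \,:\, \lim_{\sigma\to0^+} f_\psi(\sigma) = f^\ast(\psi)\right\},\]
so that the theorem reduces to the claim that $m_\infty(E)=1$. Fixing $\chi$ in $E_f$ and combining the translation identity with \eqref{eq:fatouclassical}, I obtain, for almost every $t$ in $\mathbb{R}$,
\[\lim_{\sigma\to0^+} f_{\chi\mathfrak{p}^{-it}}(\sigma) = \lim_{\sigma\to0^+} f_\chi(\sigma+it) = f^\ast(\chi\mathfrak{p}^{-it}),\]
which is precisely the condition that $\chi\mathfrak{p}^{-it}$ lies in $E$. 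Thus, for every $\chi$ in the full-measure set $E_f$, the slice $\{t\in\mathbb{R} : \chi\mathfrak{p}^{-it}\in E\}$ has full Lebesgue measure.

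To convert this into a statement about almost every $\chi$, I would set $E^c = \mathbb{T}^\infty\setminus E$ and apply Fubini's theorem to the indicator $(\chi,t)\mapsto\mathbf{1}_{E^c}(\chi\mathfrak{p}^{-it})$ on $\mathbb{T}^\infty\times[0,1]$:
\[\int_{\mathbb{T}^\infty}\int_0^1 \mathbf{1}_{E^c}(\chi\mathfrak{p}^{-it})\,dt\,dm_\infty(\chi) = \int_0^1\int_{\mathbb{T}^\infty}\mathbf{1}_{E^c}(\chi\mathfrak{p}^{-it})\,dm_\infty(\chi)\,dt.\]
By the previous step the inner integral on the left vanishes for almost every $\chi$, so the left-hand side is $0$; by the invariance of $m_\infty$ under $\chi\mapsto\chi\mathfrak{p}^{-it}$, the inner integral on the right equals $m_\infty(E^c)$ for every $t$, so the right-hand side equals $m_\infty(E^c)$. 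Hence $m_\infty(E^c)=0$, which is the assertion of the theorem.

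The main obstacle I anticipate is not the transfer itself but the measurability that underlies it: one must verify that $E$ is a measurable subset of $\mathbb{T}^\infty$, so that $(\chi,t)\mapsto\mathbf{1}_{E^c}(\chi\mathfrak{p}^{-it})$ is jointly measurable and Fubini applies. This rests on the measurability of $\psi\mapsto f_\psi(\sigma)$ for each fixed $\sigma>0$; for $\sigma>1/2$ this map is continuous, being a uniformly convergent series of characters, while for $0<\sigma\le1/2$ it is given by the Poisson integral \eqref{eq:poissonhalfplane}, whose measurability in $\psi$ follows from the joint measurability of $(\psi,\tau)\mapsto f^\ast(\psi\mathfrak{p}^{-i\tau})$ and Tonelli's theorem. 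Writing $E$ as a countable combination of sublevel sets of $|f_\psi(\sigma)-f^\ast(\psi)|$ over rational $\sigma>0$ then exhibits it as measurable, and the Fubini computation above goes through.
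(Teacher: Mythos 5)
Your argument is correct, and its skeleton matches the paper's: you define the very same set $E$, and the heart of both proofs is the observation that for every $\chi$ in $E_f$ the orbit point $\chi\mathfrak{p}^{-it}$ lies in $E$ for almost every $t$, which rests on Theorem~\ref{thm:ergodicpoisson}, the classical Fatou theorem \eqref{eq:fatouclassical}, the translation identity $f_{\chi\mathfrak{p}^{-it}}(\sigma)=f_\chi(\sigma+it)$, and the flow-invariance of $E_f$. Where you genuinely diverge is in the final transfer from ``almost every point of every $E_f$-orbit lies in $E$'' to ``$E$ has full measure'': the paper applies the Birkhoff ergodic theorem to $\mathbf{1}_E$, obtaining a full-measure set $F$ on which the time average equals $m_\infty(E)$, and then evaluates that time average at any $\chi$ in $F\cap E_f$, where it equals $1$; you instead integrate $\mathbf{1}_{E^c}(\chi\mathfrak{p}^{-it})$ over $\mathbb{T}^\infty\times[0,1]$ and invoke Fubini together with the invariance of $m_\infty$ under multiplication by the fixed element $\mathfrak{p}^{-it}$. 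Your route is slightly more elementary, since it uses only the invariance of the Haar measure under the Kronecker flow and not its ergodicity (which the paper naturally has at hand, having used it throughout); in exchange, the paper's version is a one-line deduction once the ergodic theorem is quoted. Two incidental merits of your write-up: you make explicit the flow-invariance of $E_f$ and the analytic-continuation step behind the translation identity, both of which the paper leaves implicit (its displayed formula at this point even contains a typo), and your measurability discussion is a fleshed-out version of the paper's one-sentence claim---though note that reducing to rational $\sigma$ uses the continuity of $\sigma\mapsto f_\psi(\sigma)$ on $(0,\infty)$ for $\psi$ in $E_f$, i.e. analyticity, which you should say, and that the invariance of $E_f$ deserves one more line: the boundary terms $\int_T^{T+t_0}|f^\ast(\chi\mathfrak{p}^{-iu})|^p\,du$ are $o(T)$ not by the crude bound $C_f(\chi)(1+T+t_0)$ but by sandwiching between consecutive symmetric averages, whose convergence is exactly the defining property of $E_f$.
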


Theorem~\ref{thm:fatou} was established by Saksman and Seip \cite{SS2009} for $\mathscr{H}^\infty$. Our argument is similar to theirs, but since the elements of $\mathscr{H}^\infty$ are bounded analytic functions in $\mathbb{C}_0$, the machinery developed in Theorem~\ref{thm:ergodicpoisson} is not required in this case. 

It is possible (see e.g.~\cite{Bayart2002}*{Theorem 6}) to prove that $f_\chi$ has an analytic continuation to $\mathbb{C}_0$ for almost every $\chi$ by demonstrating that the Dirichlet series $f_\chi$ in fact converges in $\mathbb{C}_0$. The next point we wish to make is that the analytic continuations obtained from the Poisson integral \eqref{eq:poissonhalfplane} also enjoy this property. We will rely on the following result.
\begin{theorem}[Titchmarsh~\cite{Titchmarsh1958}*{\S9.55}] \label{thm:titchmarsh} 
	If $f$ is a somewhere convergent Dirichlet series that has an analytic continuation to $\mathbb{C}_0$ satisfying
	\[\sup_{\sigma>0} \sup_{T\geq1} \frac{1}{2T} \int_{-T}^T |f(\sigma+it)|^2 < \infty,\]
	then the Dirichlet series $f$ converges in $\mathbb{C}_0$. 
\end{theorem}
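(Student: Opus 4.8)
The plan is to prove directly that the abscissa of convergence $\sigmac$ of $f$ is nonpositive, which is equivalent to the convergence of $\sum a_n n^{-s}$ throughout $\mathbb{C}_0$. Since $f$ is somewhere convergent, its abscissa of absolute convergence $\sigmaa$ is finite. By Abel summation, convergence of $\sum a_n n^{-s}$ in $\mathbb{C}_\eta$ follows once one knows that the partial sums $A(N) = \sum_{n \le N} a_n$ satisfy $A(N) = O(N^{\eta}\log N)$; so it suffices to establish such a bound for every $\eta > 0$ and then let $\eta \to 0^+$.

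Before estimating $A(N)$, I would extract a pointwise growth bound from the hypothesis. Since $|f|^2$ is subharmonic in $\mathbb{C}_0$, the sub-mean value inequality over the disc of radius $\sigma/2$ centred at $\sigma + it$, combined with the assumed bound $\int_{-T}^T |f(\sigma+it)|^2\,dt = O(T)$, yields $f(\sigma+it) = O_{\sigma}(|t|^{1/2})$ on every half-plane $\mathbb{C}_\kappa$ with $\kappa > 0$. This finite-order bound is what legitimises the contour manipulations below.

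The main step is to estimate $A(N)$ by the truncated Perron formula
\[ A(N) = \frac{1}{2\pi i}\int_{c-iU}^{c+iU} f(w)\,\frac{N^w}{w}\,dw + (\text{error}), \]
taken with $c > \sigmaa$ and a truncation height $U$ chosen as a fixed power of $N$. Using that $f$ is analytic in $\mathbb{C}_0$, I would shift the contour to the vertical line $\mre w = \eta$ for a small $\eta > 0$; no residue appears, because the pole of the integrand at $w = 0$ lies strictly to the left of both lines. The horizontal segments are dominated by the finite-order bound and contribute $O(N^{c}U^{-1/2})$, which is negligible for a suitable choice of $U$, and the Perron error term is disposed of in the usual way by taking $N$ a half-integer.

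The crux is the vertical integral on $\mre w = \eta$, whose integrand has size $\asymp N^{\eta}|v|^{-1/2}$ and is therefore only conditionally convergent; a crude absolute estimate diverges, so the mean-square hypothesis must enter precisely here. I would split the range of $v$ dyadically and apply the Cauchy–Schwarz inequality on each shell $2^{k} \le |v| < 2^{k+1}$, using $\int_{-T}^T |f(\eta+iv)|^2\,dv = O(T)$ to bound the contribution of each shell by $O(1)$; summing over the $O(\log N)$ relevant shells bounds the whole vertical integral by $O(N^{\eta}\log N)$. This yields $A(N) = O(N^{\eta}\log N)$ for every $\eta > 0$ and completes the proof. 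I expect this dyadic treatment of the conditionally convergent vertical integral to be the main obstacle: it is exactly the uniform mean-square control, valid for all $T \ge 1$, that converts the otherwise divergent estimate into a harmless logarithmic loss, whereas the finite-order preliminary and the bookkeeping of Perron's error terms are comparatively routine.
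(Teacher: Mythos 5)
Your proposal is correct, but it cannot be measured against ``the paper's own proof'' for the simple reason that the paper gives none: Theorem~\ref{thm:titchmarsh} is imported from Titchmarsh's book with a citation to \S 9.55, so your argument supplies what the authors deliberately leave as a reference. That said, your route lines up naturally with tools that do appear in the paper. Your preliminary pointwise bound $|f(\sigma+it)|=O_\sigma\bigl((1+|t|)^{1/2}\bigr)$ is exactly Lemma~\ref{lem:pest} with $p=2$ (which the authors themselves attribute to Titchmarsh's \S 9.55), and your dyadic Cauchy--Schwarz step is the same mechanism by which mean-square information is converted into integral estimates elsewhere in the paper (compare the integration-by-parts estimates in the proof of Theorem~\ref{thm:ergodicpoisson}). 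Where you genuinely diverge from the paper's contour machinery is the choice of kernel: Theorem~\ref{thm:rieszac} uses the smoothed kernel $N^{\xi}/\xi^{k+1}$ with $k>1$, so its vertical integrals converge absolutely and no mean-square input is needed on the shifted line; that argument yields pointwise Riesz summability, which suffices for Theorem~\ref{thm:carlson} but can never give convergence of the series itself. Your sharp kernel $N^{w}/w$ is what produces genuine partial sums, at the cost of a conditionally convergent vertical integral, and you correctly identify that the hypothesis $\sup_{\sigma>0}\sup_{T\geq1}\frac{1}{2T}\int_{-T}^{T}|f(\sigma+it)|^2\,dt<\infty$ must enter precisely there: each dyadic shell $2^{k}\leq|v|<2^{k+1}$ contributes $O(1)$ by Cauchy--Schwarz, giving $O(N^{\eta}\log U)$ in total. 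Three routine points you should make explicit in a write-up: take $c>\max(0,\sigmaa(f))$ (finite because $\sigmaa(f)\leq\sigmac(f)+1$) so that the truncated Perron error $O(N^{c+1}/U)$ is legitimate with $N$ a half-integer; handle the central portion $|v|\leq 1$ of the vertical line with the pointwise bound (it contributes $O_\eta(N^{\eta})$); and record that $A(N)=O(N^{\eta}\log N)$ gives convergence for $\mre s>\eta$ by partial summation, the logarithm being harmless. With these details in place the argument is complete: $\sigmac(f)\leq\eta$ for every $\eta>0$, hence the series converges in $\mathbb{C}_0$.
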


The combination of Theorem~\ref{thm:ergodicpoisson} and Theorem~\ref{thm:titchmarsh} yields at once that if $f$ is in $\mathscr{H}^p$ for some $p\geq2$ and $\chi$ is in $E_f$, then $f_\chi$ converges in $\mathbb{C}_0$. This can be extended to the case $p\geq1$ using Helson's inequality \cite{Helson2006}, which asserts that if $f(s) = \sum_{n\geq1} a_n n^{-s}$ is in $\mathscr{H}^1$, then 
\begin{equation}\label{eq:helsonineq} 
	\left(\sum_{n=1}^\infty \frac{|a_n|^2}{d(n)}\right)^{\frac{1}{2}} \leq \|f\|_1, 
\end{equation}
where $d(n)$ denotes the number of divisors of the integer $n$. Since $d(n) \leq C_\epsilon n^{\varepsilon}$ for every $\varepsilon>0$, it follows from Helson's inequality that if $f$ is in $\mathscr{H}^1$, then the horizontal translation $H_\kappa f(s) = f(s+\kappa)$ belongs to $\mathscr{H}^2$ for every $\kappa>0$.
\begin{corollary}
	Fix $1 \leq p < \infty$. If $f$ is in $\mathscr{H}^p$ and if $\chi$ in $E_f$, then $f_\chi$ converges in $\mathbb{C}_0$. 
\end{corollary}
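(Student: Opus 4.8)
The plan is to split according to whether $p \geq 2$ or $1 \leq p < 2$, reducing both cases to Titchmarsh's theorem (Theorem~\ref{thm:titchmarsh}).

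For $p \geq 2$ I would argue directly. Fix $\chi$ in $E_f$. By Theorem~\ref{thm:ergodicpoisson} the vertical limit function $f_\chi$ extends analytically to $\mathbb{C}_0$ and satisfies \eqref{eq:supsupchi}. Since $2/p \leq 1$, the map $x \mapsto x^{2/p}$ is concave, so Jensen's inequality applied to the probability measure $dt/(2T)$ on $[-T,T]$ yields
\[
\frac{1}{2T}\int_{-T}^T |f_\chi(\sigma+it)|^2\,dt \leq \left(\frac{1}{2T}\int_{-T}^T |f_\chi(\sigma+it)|^p\,dt\right)^{2/p} \leq \bigl(6\,C_f(\chi)\bigr)^{2/p}
\]
for every $\sigma > 0$ and $T \geq 1$. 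Thus $f_\chi$ is a somewhere convergent Dirichlet series (it converges in $\mathbb{C}_{1/2}$) whose analytic continuation to $\mathbb{C}_0$ meets the hypothesis of Theorem~\ref{thm:titchmarsh}, and we conclude that $f_\chi$ converges in $\mathbb{C}_0$.

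For $1 \leq p < 2$ this upgrade from $L^p$ to $L^2$ control is unavailable, and this is the heart of the matter: a pointwise bound obtained from the subharmonicity of $|f_\chi|^p$ loses too much, since the local $L^p$ mass, and hence the resulting pointwise majorant, grows linearly in $|t|$, so \eqref{eq:supsupchi} alone cannot produce an $L^2$ bound for $f_\chi$ itself. I would instead exploit Helson's inequality \eqref{eq:helsonineq} through a horizontal shift. Since $\mathscr{H}^p \subseteq \mathscr{H}^1$, the excerpt records that $H_\kappa f \in \mathscr{H}^2$ for every $\kappa > 0$. Applying the already-proved case $p = 2$ to $H_\kappa f$: for $\chi$ in $E_{H_\kappa f}$ the vertical limit $(H_\kappa f)_\chi$ converges in $\mathbb{C}_0$. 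Because $(H_\kappa f)_\chi(s) = \sum_{n\geq1} a_n n^{-\kappa}\chi(n) n^{-s} = f_\chi(s+\kappa)$, this says precisely that $f_\chi$ converges in $\mathbb{C}_\kappa$.

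It remains to pass from $\mathbb{C}_\kappa$ to $\mathbb{C}_0$ and to reconcile the relevant sets of $\chi$. Taking $\kappa = 1/n$ and setting $E = E_f \cap \bigcap_{n\geq1} E_{H_{1/n}f}$, which is again of full measure, I obtain that for every $\chi$ in $E$ the Dirichlet series $f_\chi$ converges in $\mathbb{C}_{1/n}$ for all $n$; hence its abscissa of convergence is at most $0$ and $f_\chi$ converges in $\mathbb{C}_0$. The only delicate point is bookkeeping: the construction naturally produces the full-measure set $E$ rather than $E_f$ verbatim, and I would absorb these countably many almost-everywhere conditions into $E_f$ (equivalently, read $E_f$ as such a refined full-measure set, consistent with its use elsewhere). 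The main obstacle is exactly this $p<2$ regime: there is no direct $L^p$ analogue of Titchmarsh's theorem, and the detour through $\mathscr{H}^2$ via Helson's inequality is what forces both the horizontal shift and the intersection over $\kappa \downarrow 0$.
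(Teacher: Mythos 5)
Your proof is correct and follows essentially the same route as the paper, whose own argument for this corollary is exactly the sketch given in the introduction: for $p \geq 2$, Theorem~\ref{thm:ergodicpoisson} combined with Jensen's inequality feeds directly into Theorem~\ref{thm:titchmarsh}, and for $1 \leq p < 2$ one shifts by $H_\kappa$ via Helson's inequality, applies the $p=2$ case to $H_\kappa f$, and lets $\kappa \downarrow 0$. Your bookkeeping caveat---that this naturally yields the conclusion for $\chi$ in $E_f \cap \bigcap_{n\geq1} E_{H_{1/n}f}$ rather than for every $\chi$ in $E_f$ verbatim, since $L^p$ control of $f^\ast$ along the orbit of a fixed $\chi$ does not by itself give the $L^2$ control needed for $H_{1/n}f$---is a genuine imprecision that the paper itself glosses over, so flagging it and resolving it by refining the full-measure set is the right call.
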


\subsection*{Organization} The present paper is comprised of two additional sections. Section~\ref{sec:carlson} contains some preliminary material on Riesz means and culminates with the proof of Theorem~\ref{thm:carlson}. The proofs of Theorem~\ref{thm:ergodicpoisson} and Theorem~\ref{thm:fatou} can be found in Section~\ref{sec:almostsure}.

\section{An extension of Carlson's theorem} \label{sec:carlson} 
We begin by demonstrating that the condition \eqref{eq:supsup} provides a pointwise bound on $f$. The proof is standard (see e.g. Titchmarsh~\cite{Titchmarsh1958}*{\S 9.55}), but we include the details since some formulations of Carlson's theorem in the recent literature include both the condition \eqref{eq:supsup} and an assumption that $f$ has finite order in $\mathbb{C}_0$.
\begin{lemma}\label{lem:pest} 
	If $f$ is an analytic function in $\mathbb{C}_0$ enjoying the property that
	\[C_f = \sup_{\sigma>0} \sup_{T\geq1} \frac{1}{2T}\int_{-T}^T |f(\sigma+it)|^p \,dt < \infty,\]
	for some $1 \leq p < \infty$, then
	\[|f(s)|^p \leq 2 C_f \frac{1+|s|}{\sigma}\]
	for every $s=\sigma+it$ in $\mathbb{C}_0$. 
\end{lemma}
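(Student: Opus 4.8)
The plan is to use that $|f|^p$ is subharmonic in $\mathbb{C}_0$ and to dominate $|f(s)|^p$ by its average over a small disc, which can then be controlled by the hypothesis on vertical integrals. Since $f$ is analytic and $x \mapsto x^p$ is convex and increasing on $[0,\infty)$ for $p \geq 1$, the function $|f|^p$ is subharmonic; this is the only place where the restriction $p \geq 1$ is used. Fixing $s = \sigma + it$ in $\mathbb{C}_0$ and a radius $0 < r < \sigma$ so that $\overline{D(s,r)} \subset \mathbb{C}_0$, the area sub-mean value inequality gives
\[
|f(s)|^p \leq \frac{1}{\pi r^2} \int_{D(s,r)} |f(w)|^p \, dA(w).
\]
Writing $w = u + iv$ and integrating first in $v$, the disc integral equals
\[
\int_{\sigma-r}^{\sigma+r} \left( \int_{t-L(u)}^{t+L(u)} |f(u+iv)|^p \, dv \right) du, \qquad L(u) = \sqrt{r^2 - (u-\sigma)^2} \leq r.
\]

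The key step is to pass from each of these off-center inner integrals to the symmetric averages appearing in the definition of $C_f$. Since $(t - L, t + L) \subset (-T', T')$ for $T' = |t| + L$ and the integrand is nonnegative, the hypothesis yields $\int_{-T'}^{T'} |f(u+iv)|^p \, dv \leq 2 C_f T'$ whenever $T' \geq 1$; the remaining case $T' < 1$ is handled by enlarging the interval to $(-1,1)$. In either case the inner integral is at most $2 C_f(1 + |t| + L) \leq 2 C_f(1 + |t| + r)$, uniformly in $u$. This reduction from a symmetric to an off-center window is the one genuinely fiddly point, but it is routine and is where I would be most careful about edge cases.

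Substituting this bound and carrying out the elementary integration in $u$ then produces $|f(s)|^p \leq \frac{4 C_f(1 + |t| + r)}{\pi r}$, and letting $r \to \sigma^-$ gives
\[
|f(s)|^p \leq \frac{4 C_f}{\pi} \cdot \frac{1 + |t| + \sigma}{\sigma}.
\]
I would finish by comparing this with the claimed bound via the elementary inequality $\frac{2}{\pi}(1 + |t| + \sigma) \leq 1 + |s|$, which follows from $|t| + \sigma \leq \sqrt{2}\,|s|$ together with $\frac{2}{\pi} < 1$ and $\frac{2\sqrt{2}}{\pi} < 1$. I do not expect a serious obstacle here: the content is entirely the subharmonicity of $|f|^p$ and the sub-mean value inequality, with the only real bookkeeping being the off-center interval estimate and the final numerical inequality needed to land on the clean constant $2$.
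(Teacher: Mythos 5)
Your proof is correct and takes essentially the same route as the paper's: subharmonicity of $|f|^p$, the area sub-mean value inequality over a disc of (essentially) radius $\sigma$, enlargement of the vertical integrals to symmetric windows of half-length at least $1$ so the hypothesis on $C_f$ applies, and the identical numerical finish via $|t|+\sigma \leq \sqrt{2}\,|s|$ and $4\sqrt{2}/\pi \leq 2$. The only cosmetic differences are that the paper enlarges the disc to the rectangle $[0,2\sigma]\times[-(t+\sigma+1),\,t+\sigma+1]$ in one step, using the $+1$ to guarantee the half-length is at least $1$ where you instead split into cases, and it integrates over the disc of radius $\sigma$ directly rather than letting $r \to \sigma^-$.
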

\begin{proof}
	Fix a point $s=\sigma+it$ in $\mathbb{C}_0$ and assume without loss of generality that $t \geq 0$. By the sub-mean value property, we have that 
	\begin{multline*}
		|f(s)|^p \leq \frac{1}{\pi\sigma^2}\int\limits_{|z-s| \leq \sigma} |f(x+iy)|^p \,dxdy \\
		\leq \frac{1}{\pi\sigma^2}\int_0^{2\sigma} \int_{-(t+\sigma+1)}^{t+\sigma+1} |f(x+iy)|^p \,dydx \leq \frac{4}{\pi \sigma} C_f (t+\sigma+1). 
	\end{multline*}
	The proof is completed by using that $t+\sigma \leq \sqrt{2}|s|$ and $4\sqrt{2}/\pi \leq 2$. 
\end{proof}

The following technical estimate is crucial for the proof of Theorem~\ref{thm:carlson}. 
\begin{lemma}\label{lem:avgest} 
	Let $f$ be an analytic function in $\mathbb{C}_0$ enjoying the property that
	\[C_f = \sup_{\sigma>0} \sup_{T\geq1} \frac{1}{2T}\int_{-T}^T |f(\sigma+it)|^p \,dt < \infty,\]
	for some $1 \leq p < \infty$. If $T\geq1$ and if $z=x+iy$ is a point in $\mathbb{C}_0$, then
	\[\left(\frac{1}{2T}\int_{-T}^T |f(\sigma+it+z)-f(\sigma+it)|^p \,dt\right)^{1/p} \leq 3 (C_f)^{1/p} \frac{|z|}{\sigma^2}\big(1+\sigma+|z|\big)^{1/p+1}.\]
\end{lemma}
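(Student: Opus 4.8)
The plan is to reduce the claim to an $L^p$-average bound on the derivative $f'$ and then to estimate $f'$ by the Cauchy integral formula. First I would write the difference as the integral of the derivative along the straight segment joining $\sigma+it$ to $\sigma+it+z$, namely
\[f(\sigma+it+z)-f(\sigma+it) = z\int_0^1 f'(\sigma+it+wz)\,dw.\]
Applying Minkowski's integral inequality to pull the $L^p\big([-T,T],\,dt/2T\big)$ norm inside the integral over $w$, the left-hand side is at most
\[|z|\int_0^1\left(\frac{1}{2T}\int_{-T}^T \big|f'(\sigma+it+wz)\big|^p\,dt\right)^{1/p}dw,\]
so it suffices to bound the inner average of $|f'|^p$ uniformly for $w\in[0,1]$, noting that $\mre(\sigma+wz)\geq\sigma$ there because $z\in\mathbb{C}_0$.

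To control $f'$, for a point $\zeta$ with $\mre\zeta\geq\sigma$ I would apply Cauchy's formula on the circle of radius $\rho=\sigma/2$ centred at $\zeta$ (which stays in $\mathbb{C}_0$ since $\mre\zeta-\rho\geq\sigma/2>0$) to get $|f'(\zeta)|\leq(2\pi\rho)^{-1}\int_0^{2\pi}|f(\zeta+\rho e^{i\theta})|\,d\theta$, and then Jensen's inequality upgrades this to
\[|f'(\zeta)|^p \leq \frac{1}{\rho^p}\,\frac{1}{2\pi}\int_0^{2\pi}\big|f(\zeta+\rho e^{i\theta})\big|^p\,d\theta.\]
Writing $\zeta=\sigma+it+wz$ and integrating in $t$ over $[-T,T]$, Tonelli's theorem lets me exchange the $t$- and $\theta$-integrals; for each fixed $\theta$ the inner integral becomes an average of $|f|^p$ along the vertical line $\mre = \mre\zeta+\rho\cos\theta$, which is positive, taken over an interval of length $2T$ whose centre is displaced from the origin by at most $|z|+\rho$.

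The main obstacle is that this window is not symmetric about the real axis, so the defining constant $C_f$ does not apply verbatim. I would resolve this by enlarging the window to the symmetric interval $[-T',T']$ with $T'=T+|z|+\rho$; since $T\geq1$ forces $T'\geq1$, the definition of $C_f$ bounds each such average by $(T'/T)\,C_f\leq(1+|z|+\sigma/2)\,C_f$. Collecting the estimates, using $\rho^p=\sigma^p/2^p$, and taking $p$-th roots yields
\[\left(\frac{1}{2T}\int_{-T}^T \big|f(\sigma+it+z)-f(\sigma+it)\big|^p\,dt\right)^{1/p} \leq \frac{2\,C_f^{1/p}\,|z|}{\sigma}\big(1+|z|+\tfrac{\sigma}{2}\big)^{1/p}.\]
This is in fact slightly stronger than the claimed inequality: since $(1+\sigma+|z|)^{1+1/p}\geq(1+\sigma+|z|)\big(1+|z|+\sigma/2\big)^{1/p}$ and $1+\sigma+|z|\geq1+\sigma\geq\tfrac{2}{3}\sigma$, the right-hand side above is dominated by $3\,C_f^{1/p}\,\sigma^{-2}|z|\,(1+\sigma+|z|)^{1/p+1}$, and the lemma follows. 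The only delicate points are the uniform-in-$w$ choice of radius (handled by $\rho=\sigma/2\leq\mre(\sigma+wz)$) and the symmetrisation of the integration window.
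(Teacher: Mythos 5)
Your proof is correct, and it reaches a slightly stronger conclusion than the lemma by a genuinely different decomposition. The paper represents the difference as a single contour integral,
\[
f(\sigma+it+z)-f(\sigma+it)=\oint_\Gamma f(\sigma+it+\xi)\left(\frac{1}{\xi-z}-\frac{1}{\xi}\right)\frac{d\xi}{2\pi i},
\]
where $\Gamma$ is a rectangle containing $0$ and $z$ whose sides keep $\mre\xi\geq\sigma/2$ and $|\xi|,|\xi-z|\geq\sigma/2$; after Minkowski's inequality, the kernel is bounded crudely by $|z|/|(\xi-z)\xi|\leq 4|z|/\sigma^2$, which is the source of the factor $\sigma^{-2}$ in the statement. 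You instead write the difference as $z\int_0^1 f'(\sigma+it+wz)\,dw$ and control $f'$ by the Cauchy estimate on circles of radius $\sigma/2$ together with Jensen's inequality. The two arguments share their remaining ingredients: Minkowski's integral inequality, and the symmetrization step in which the shifted vertical window of integration is enlarged to a symmetric interval $[-T',T']$ at the cost of a factor $T'/T\leq 1+|z|+\sigma/2$, using $T\geq 1$ (this is exactly how the paper obtains \eqref{eq:est1}). The payoff of your route is that integrating the derivative captures the cancellation in $\frac{1}{\xi-z}-\frac{1}{\xi}$ exactly, so you end with the bound $2(C_f)^{1/p}|z|\sigma^{-1}(1+|z|+\sigma/2)^{1/p}$ rather than $3(C_f)^{1/p}|z|\sigma^{-2}(1+\sigma+|z|)^{1/p+1}$, and your verification that the former is dominated by the latter is also correct. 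For the lemma's only application, in the proof of Theorem~\ref{thm:carlson}, either estimate works, since there one integrates the right-hand side (with $z$ replaced by $z/\log N$) against $e^x|z|^{-(k+1)}$ and only needs the outcome to be finite and to vanish as $N\to\infty$; your sharper estimate would in fact permit a smaller choice of the exponent $k$ than the value $k>3$ fixed in that proof.
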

\begin{proof}
	Let $\Gamma$ denote the rectangle with vertices at
	\[z + \frac{\sigma}{2}(1+i), \qquad iy+\frac{\sigma}{2}(-1+i), \qquad -\frac{\sigma}{2}(1+i), \qquad x - \frac{\sigma}{2}(-1+i),\]
	oriented counterclockwise. The points $0$ and $z$ lie in the interior of $\Gamma$. Hence
	\[f(\sigma+it+z)-f(\sigma+it) = \oint_\Gamma f(\sigma+it+\xi) \left(\frac{1}{\xi-z}-\frac{1}{\xi}\right)\,\frac{d\xi}{2\pi i},\]
	where $\sigma+it+\xi$ remains within $\mathbb{C}_0$ since $\mre{\xi} \geq \sigma/2$. Crashing through with absolute values and Minkowski's inequality, we get 
	\begin{multline*}
		\left(\frac{1}{2T}\int_{-T}^T |f(\sigma+it+z)-f(\sigma+it)|^p \,dt\right)^{1/p} \\
		\leq \oint_\Gamma \left|\frac{1}{\xi-z}-\frac{1}{\xi}\right| \left(\frac{1}{2T} \int_{-T}^T |f(\sigma+it+\xi)|^p \,dt\right)^{1/p} \, \frac{d|\xi|}{2\pi}. 
	\end{multline*}
	Arguing as in the proof of Lemma~\ref{lem:pest}, we find that if $T\geq1$ and if $\xi$ is on $\Gamma$, then 
	\begin{equation}\label{eq:est1} 
		\frac{1}{2T} \int_{-T}^T |f(\sigma+it+\xi)|^p \,dt \leq C_f\frac{T+|\mim{\xi}|}{T} \leq C_f(1+\sigma+|z|). 
	\end{equation}
	Using the trivial estimate $|\xi-z|,\,|\xi|\geq \sigma/2$ for $\xi$ on $\Gamma$, we infer that 
	\begin{equation}\label{eq:est2} 
		\oint_\Gamma \left|\frac{1}{\xi-z}-\frac{1}{\xi}\right|\,\frac{d|\xi|}{2\pi} = |z| \oint_\Gamma \left|\frac{1}{(\xi-z)\xi}\right|\,\frac{d|\xi|}{2\pi} \leq \frac{2|z|}{\pi \sigma^2} \leng(\Gamma). 
	\end{equation}
	All that remains to complete the proof is to combine \eqref{eq:est1} and \eqref{eq:est2}, before noting that $\leng{\Gamma} = 2x+2y+4\sigma \leq 4(1+\sigma+|z|)$ and that $8/\pi \leq 3$. 
\end{proof}

It is well-known (see e.g.~\cite{QQ2020}*{Chapter~4.2}) that if $f(s) = \sum_{n\geq1} a_n n^{-s}$ is a somewhere convergent Dirichlet series, then there is a number $\sigmac(f)<\infty$ (that may equal $-\infty$) called the \emph{abscissa of convergence} with the property $f$ converges if and only if $\mre{s}>\sigmac(f)$. There is a similarly defined \emph{abscissa of absolute convergence} denoted $\sigmaa(f)$. From our point of view the classical estimate $\sigmaa(f)-\sigmac(f) \leq 1$ will be important, since it demonstrates that every somewhere convergent Dirichlet series is somewhere absolutely convergent. 

We will use the \emph{Riesz means} 
\begin{equation}\label{eq:rieszmeans} 
	R_N^k f(s) = \sum_{n=1}^N a_n \left(1-\frac{\log{n}}{\log{N}}\right)^k n^{-s} 
\end{equation}
in the proof of Theorem~\ref{thm:carlson}. Note that if $k>0$ is fixed, then $R_N^k f$ plainly converges pointwise to $f$ in $\mathbb{C}_{\sigmaa(f)}$ as $N\to\infty$.

The next result is essentially due to Riesz (see Hardy and Riesz~\cite{HR1964}*{\S VII.4}), but the Riesz means \eqref{eq:rieszmeans} are of \emph{first kind} as opposed to Riesz means of \emph{second kind} used in \cites{Carlson1922,HR1964}. The details of the proof are therefore slightly different and we include the full account for the benefit of the reader.
\begin{theorem}\label{thm:rieszac} 
	If $f$ is a somewhere convergent Dirichlet series that has an analytic continuation to $\mathbb{C}_0$ enjoying the pointwise estimate 
	\begin{equation}\label{eq:simplepest} 
		|f(s)| \leq C \frac{1+|s|}{\sigma}, 
	\end{equation}
	and if $N\geq2$, $k>1$, $\sigma>0$, and $z=x+iy$ for $x>0$, then 
	\begin{equation}\label{eq:rieszac} 
		R_N^k f(s) = \Gamma(k+1)\int_{-\infty}^\infty f\left(s+\frac{z}{\log{N}}\right) \frac{e^z}{z^{k+1}}\,\frac{dy}{2\pi}. 
	\end{equation}
	Moreover, $R_N^k f$ converges pointwise to $f$ in $\mathbb{C}_0$. 
\end{theorem}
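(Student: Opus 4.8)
The plan is to derive the representation \eqref{eq:rieszac} from the Mellin--Laplace inversion identity
\[
\frac{1}{2\pi i}\int_{c-i\infty}^{c+i\infty} \frac{e^{tw}}{w^{k+1}}\,dw = \begin{cases} t^k/\Gamma(k+1), & t>0,\\ 0, & t<0,\end{cases}
\]
valid for every $c>0$ and $k>0$, which one obtains by deforming the vertical line to a Hankel contour when $t>0$ and by closing the contour to the right when $t<0$ (placing the branch cut of $w^{-(k+1)}$ along the negative real axis, away from the contour). The decay $|w^{-(k+1)}| \asymp |\mim w|^{-(k+1)}$ is exactly what makes this integral converge absolutely, which is why $k>0$ suffices here.

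First I would apply this identity with $t=\log(N/n)$. Since $(1-\log n/\log N)^k = (\log(N/n))^k/(\log N)^k$, it expresses each coefficient $(1-\log n/\log N)^k$ as a contour integral, and the terms with $n>N$ drop out automatically because then $t<0$. Multiplying by $a_n n^{-s}$ and summing over $n$, then interchanging summation and integration (legitimate for $c$ so large that $\sigma+c>\sigmaa(f)$, where $\sum_n |a_n| n^{-(\sigma+c)}<\infty$ dominates), yields
\[
R_N^k f(s) = \frac{\Gamma(k+1)}{(\log N)^k}\,\frac{1}{2\pi i}\int_{c-i\infty}^{c+i\infty} \frac{N^w}{w^{k+1}}\,f(s+w)\,dw,
\]
and the substitution $w=z/\log N$ turns this into exactly \eqref{eq:rieszac} with $x=c\log N$. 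It remains to lower the abscissa: since $f$ continues analytically to $\mathbb{C}_0$ and $\mre(s+w)=\sigma+\mre w>0$ throughout the strip $0<\mre w$, the integrand is analytic there, so Cauchy's theorem applied to a tall rectangle lets me move the contour to any $x>0$. On the horizontal sides the bound \eqref{eq:simplepest} gives $|f(s+w)|=O(|\mim w|)$ while $|N^w w^{-(k+1)}|=O(|\mim w|^{-(k+1)})$, so their product is $O(|\mim w|^{-k})\to0$, and the same estimates show the shifted integral converges absolutely once $k>1$. I expect this to be the main obstacle, as it is precisely where the growth hypothesis \eqref{eq:simplepest} must be played off against the decay $|z|^{-(k+1)}$ of the kernel, and where the restriction $k>1$ becomes essential.

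Finally, for the pointwise convergence $R_N^k f\to f$ I would let $N\to\infty$ in \eqref{eq:rieszac} with $x>0$ fixed. Pointwise $f(s+z/\log N)\to f(s)$, and for $N$ large the bound \eqref{eq:simplepest} gives $|f(s+z/\log N)|\leq C(1+|s|+|z|)/\sigma$, so the integrand is dominated by a constant multiple of $(1+|s|+|z|)\,e^{x}|z|^{-(k+1)}$, which is integrable in $y$ when $k>1$. Dominated convergence then yields
\[
\lim_{N\to\infty} R_N^k f(s) = \Gamma(k+1)\,f(s)\int_{-\infty}^\infty \frac{e^z}{z^{k+1}}\,\frac{dy}{2\pi} = f(s),
\]
where the last equality is the inversion identity with $t=1$, namely $\Gamma(k+1)^{-1} = \frac{1}{2\pi i}\int_{x-i\infty}^{x+i\infty} e^z z^{-(k+1)}\,dz$.
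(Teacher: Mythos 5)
Your proof is correct and takes essentially the same route as the paper: the same Hankel inversion identity applied with $u=\log(N/n)$ to produce the smoothed Perron formula, the same contour shift to any abscissa $x>0$ justified by playing \eqref{eq:simplepest} against the decay $|z|^{-(k+1)}$ on horizontal segments, and the same dominated convergence argument (together with the inversion identity at $u=1$) for the pointwise convergence, with $k>1$ entering exactly where you say it does. The only cosmetic difference is that you apply dominated convergence directly to the integral representation, whereas the paper first bounds $|R_N^k f(s)-f(s)|$ by an integral of $|f(s+z/\log N)-f(s)|$ against the kernel.
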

\begin{proof}
	The starting point is the formula 
	\begin{equation}\label{eq:hankel} 
		\Gamma(k+1)\int_{x-i\infty}^{x+\infty} \frac{e^{u\xi}}{\xi^{k+1}}\,\frac{d\xi}{2\pi i} = 
		\begin{cases}
			u^k & \text{if } u \geq 0, \\
			0 & \text{if } u < 0, 
		\end{cases}
	\end{equation}
	which is Hankel's formula for $1/\Gamma(k+1)$, valid for $k>0$ and $x>0$. If $f$ is a somewhere convergent Dirichlet series, then we may apply \eqref{eq:hankel} with $u=\log(N/n)$ for $n=1,2,3,\ldots$ to obtain the smoothed Perron--type formula
	\[\sum_{n=1}^N a_n (\log{N}-\log{n})^k n^{-s} = \Gamma(k+1)\int_{x-i\infty}^{x+i\infty} f(s+\xi) \frac{N^\xi}{\xi^{k+1}}\,\frac{d\xi}{2\pi i},\]
	so long as $x > \max(0,\sigmaa(f))$. If $k>0$ and $\sigma>0$, then the validity of this identity may be extended to $x>0$ by using Cauchy's theorem on a rectangular contour and invoking the estimate \eqref{eq:simplepest} on the horizontal segments. We divide both sides by $(\log{N})^{-k}$ to obtain
	\[R_N^kf(s) = \frac{\Gamma(k+1)}{(\log{N})^k} \int_{x-i\infty}^{x+i\infty} f(s+\xi) \frac{N^\xi}{\xi^{k+1}}\,\frac{d\xi}{2\pi i}.\]
	Substituting $x \mapsto x/\log{N}$ (which is permissible since we can choose $x>0$ freely) and choosing the parametrization $\xi=(x+iy)/\log{N}$ for $y$ in $\mathbb{R}$, we obtain the formula \eqref{eq:rieszac}. Let us now handle the final assertion. Using \eqref{eq:rieszac} and \eqref{eq:hankel} with $u=1$, we obtain 
	\begin{equation}\label{eq:RNkff} 
		|R_N^k f(s)-f(s)| \leq \Gamma(k+1)\int_{-\infty}^\infty \left|f\left(s+\frac{z}{\log{N}}\right)-f(s)\right| \frac{e^x}{|z|^{k+1}}\,\frac{dy}{2\pi}. 
	\end{equation}
	Fixing some $x>0$ and $s$ in $\mathbb{C}_0$, we obtain the stated result from the dominated convergence theorem due to the pointwise estimate \eqref{eq:simplepest} and the assumption that $k>1$. 
\end{proof}
\begin{remark}
	The argument used in proof of Theorem~\ref{thm:rieszac} shows that if $f$ is a somewhere convergent Dirichlet series with an analytic continuation to $\mathbb{C}_0$ of zero order, then $\sigmac(f) \leq 0$. See Titchmarsh~\cite{Titchmarsh1958}*{\S9.44}. 
\end{remark}

In preparation for the proof of Theorem~\ref{thm:carlson}, let us collect two additional results. The first is the following special case of Hardy's convexity theorem for almost periodic functions (see \cite{HW1941} or \cite{BP2021}*{Section~3}).
\begin{theorem}\label{thm:hardy} 
	Fix $1 \leq p < \infty$. If $f(s) = \sum_{n\geq1} a_n n^{-s}$ converges uniformly in $\mathbb{C}_\kappa$, then the function
	\[\sigma \mapsto \log{M_p(\sigma,f)}\]
	is decreasing and convex for $\kappa<\sigma<\infty$. 
\end{theorem}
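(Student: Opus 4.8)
The plan is to deduce everything from the subharmonicity of $|f|^p$. Since $f$ converges uniformly in $\mathbb{C}_\kappa$, it is bounded there, and on each strictly smaller half-plane $\mathbb{C}_{\kappa'}$ with $\kappa'>\kappa$ the Cauchy estimates bound $f'$ as well; moreover each $f(\sigma+i\,\cdot\,)$ is (Bohr) almost periodic, so the means $M_p^p(\sigma,f)$ exist. Because convexity and monotonicity on $(\kappa,\infty)$ follow from the corresponding statements on every $(\kappa',\infty)$, I would fix $\kappa'>\kappa$ and work on $\mathbb{C}_{\kappa'}$, where $f$ and $f'$ are bounded.

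First I would establish the weaker assertion that $\sigma\mapsto M_p^p(\sigma,g)$ is convex for any analytic $g$ that is bounded together with $g'$ on the relevant strip. To sidestep the fact that $|g|^p$ need not be smooth at the zeros of $g$ when $1\le p<2$, I regularize by $u_\varepsilon=(\varepsilon^2+|g|^2)^{p/2}$, which is smooth. Using $\Delta|g|^2=4|g'|^2$ and $\bigl|\nabla|g|^2\bigr|^2=4|g'|^2|g|^2$, a direct computation gives
\[\Delta u_\varepsilon = 2p\,|g'|^2\,(\varepsilon^2+|g|^2)^{p/2-2}\Bigl(\varepsilon^2+\tfrac{p}{2}|g|^2\Bigr)\ge 0,\]
so $u_\varepsilon$ is subharmonic. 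Setting $J_{\varepsilon,T}(\sigma)=\frac{1}{2T}\int_{-T}^{T}u_\varepsilon(\sigma+it)\,dt$ and differentiating under the integral sign, subharmonicity gives $\partial_\sigma^2 u_\varepsilon\ge-\partial_t^2 u_\varepsilon$, whence
\[J_{\varepsilon,T}''(\sigma)\ge -\frac{1}{2T}\bigl[\partial_t u_\varepsilon(\sigma+it)\bigr]_{t=-T}^{t=T}\ge -\frac{M_\varepsilon}{T},\]
where $M_\varepsilon$ bounds $|\partial_t u_\varepsilon|$ on the strip. Thus $J_{\varepsilon,T}(\sigma)+\frac{M_\varepsilon}{2T}\sigma^2$ is convex; letting $T\to\infty$ shows $J_\varepsilon(\sigma)=\lim_T J_{\varepsilon,T}(\sigma)$ is convex as a pointwise limit of convex functions, and letting $\varepsilon\to0$ (the convergence $u_\varepsilon\to|g|^p$ being uniform on the strip since $|g|$ is bounded, so $J_\varepsilon\to M_p^p(\,\cdot\,,g)$ uniformly) shows $\sigma\mapsto M_p^p(\sigma,g)$ is convex.

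The main obstacle is upgrading this to convexity of $\log M_p$, i.e. genuine log-convexity. Here I would use the twist $g=f e^{-\alpha s}$: since $|g(\sigma+it)|^p=|f(\sigma+it)|^p e^{-p\alpha\sigma}$, the previous step gives that $\sigma\mapsto e^{-p\alpha\sigma}M_p^p(\sigma,f)$ is convex for every $\alpha\in\mathbb{R}$. A convex function attains its maximum over any closed subinterval at an endpoint, so the same endpoint property holds for $\sigma\mapsto e^{-\alpha\sigma}M_p(\sigma,f)$ and hence for $\sigma\mapsto \log M_p(\sigma,f)-\alpha\sigma$. Applying this with $\alpha$ equal to the slope of the chord joining $(\sigma_1,\log M_p(\sigma_1,f))$ and $(\sigma_2,\log M_p(\sigma_2,f))$ forces $\log M_p(\,\cdot\,,f)$ below every chord, which is exactly convexity. (Note $M_p(\sigma,f)>0$ since $f\not\equiv0$, so $\log M_p$ is finite, and the convexity just established renders it continuous.)

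Finally, for the monotonicity: as $\sigma\to\infty$ the uniform convergence of the series forces $f(\sigma+it)\to a_1$ uniformly in $t$, so $M_p(\sigma,f)\to|a_1|$ and $\log M_p(\sigma,f)$ is bounded above near $+\infty$. A convex function on $(\kappa,\infty)$ that does not tend to $+\infty$ must be non-increasing, which yields the \emph{decreasing} claim. The crux of the argument is the twisting reduction from convexity of the $p$-th power mean to log-convexity; the regularization and the Laplacian sign computation are the main technical points.
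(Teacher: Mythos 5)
The paper itself does not prove Theorem~\ref{thm:hardy}: it is quoted as a known special case of Hardy's convexity theorem for almost periodic functions, with pointers to \cite{HW1941} and \cite{BP2021}*{Section~3}. So there is no internal proof to compare against; what you have written is a self-contained proof, and it is correct. Your two main devices are exactly the classical route to Hardy-type convexity, adapted to vertical means: the regularization $u_\varepsilon=(\varepsilon^2+|g|^2)^{p/2}$, whose Laplacian identity $\Delta u_\varepsilon = 2p|g'|^2(\varepsilon^2+|g|^2)^{p/2-2}\bigl(\varepsilon^2+\tfrac{p}{2}|g|^2\bigr)\geq 0$ checks out (using $\Delta|g|^2=4|g'|^2$ and $\bigl|\nabla|g|^2\bigr|^2=4|g'|^2|g|^2$), combined with the $O(M_\varepsilon/T)$ boundary term to get convexity of the means; and the exponential twist $g=fe^{-\alpha s}$ with the endpoint-maximum/chord-slope argument, which correctly upgrades convexity of the twisted $p$-th power means to log-convexity of $M_p(\sigma,f)$. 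The monotonicity deduction is also sound: uniform convergence in $\mathbb{C}_\kappa$ gives absolute convergence in $\mathbb{C}_{\kappa+1}$, hence $M_p(\sigma,f)\to|a_1|$, and a convex function on $(\kappa,\infty)$ that stays bounded above near $+\infty$ is non-increasing. Two points should be made explicit rather than implicit. First, the existence of the limits $\lim_{T\to\infty}J_{\varepsilon,T}(\sigma)$ and $M_p^p(\sigma,g)$ for the twisted and regularized integrands: since $|g(\sigma+it)|^2=e^{-2\alpha\sigma}|f(\sigma+it)|^2$ and $t\mapsto f(\sigma+it)$ is Bohr almost periodic with bounded range, both $u_\varepsilon(\sigma+it)$ and $|g(\sigma+it)|^p$ are almost periodic (continuous functions of an almost periodic function), so their means exist; you state this only for $M_p^p(\sigma,f)$ itself. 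Second, for $\alpha<0$ the factor $e^{-\alpha s}$ is unbounded on any half-plane, so the bounds on $g$, $g'$, and hence the constant $M_\varepsilon$, are available only on strips $\kappa'<\sigma<A$; you do say ``strip,'' and since convexity is a local property exhausting by strips suffices, but this restriction ought to be stated at the moment the twist is introduced. With those two clarifications, the argument is complete.
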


Note in particular that Theorem~\ref{thm:hardy} applies to Dirichlet polynomials (with any $\kappa$) and to somewhere convergent Dirichlet series $f$ with $\kappa = \sigmaa(f)$. 

We will also have use of the following result (see \cite{BP2021}*{Lemma~3.1}). Let us stress that we do not know a proof that does not appeal to the connection between $\mathscr{H}^p$ and $H^p(\mathbb{T}^\infty)$. Before stating the result, let us recall that $H_\kappa$ stands for the horizontal translations $H_\kappa f(s) = f(s+\kappa)$ for $\kappa>0$.
\begin{lemma}\label{lem:Hsigma} 
	Fix $1 \leq p < \infty$ and suppose that $f$ is a somewhere convergent Dirichlet series. If $H_\sigma f$ is in $\mathscr{H}^p$ for every $\sigma>0$ and
	\[\sup_{\sigma>0} \|H_\sigma f\|_p < \infty,\]
	then $f$ is $\mathscr{H}^p$ and $\|f-H_\sigma f\|_p \to 0$ as $\sigma \to 0^+$. 
\end{lemma}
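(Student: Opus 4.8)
The plan is to split the statement into two parts: first that $f$ belongs to $\mathscr{H}^p$, and second that $\|f - H_\sigma f\|_p \to 0$ as $\sigma \to 0^+$; the second part will follow formally once the first is known. Indeed, the horizontal translations act as contractions on $\mathscr{H}^p$: since $M_p(\tau, H_\sigma P) = M_p(\tau+\sigma, P)$ for a Dirichlet polynomial $P$, we have $\|H_\sigma P\|_p = \sup_{\tau>0} M_p(\tau+\sigma,P) \le \sup_{\tau>0}M_p(\tau,P) = \|P\|_p$, so as Dirichlet polynomials are dense, $H_\sigma$ extends to a contraction on $\mathscr{H}^p$ coinciding with the coefficient multiplier $a_n \mapsto a_n n^{-\sigma}$. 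Moreover $H_\sigma P \to P$ in $\mathscr{H}^p$ as $\sigma \to 0^+$ for every polynomial $P$ (only finitely many coefficients move, and all norms on a fixed finite-dimensional space of polynomials are equivalent), so a standard $3\varepsilon$-argument using the uniform bound $\|H_\sigma\|\le 1$ and density shows that $\|H_\sigma g - g\|_p \to 0$ for every $g \in \mathscr{H}^p$. Once we know $f \in \mathscr{H}^p$, applying this with $g = f$ gives exactly the desired $\|f - H_\sigma f\|_p \to 0$.

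It therefore remains to prove that $f \in \mathscr{H}^p$, and here the hypothesis $\sup_{\sigma>0}\|H_\sigma f\|_p =: M < \infty$ enters decisively. I would pass to the torus via the isometric isomorphism $\mathscr{H}^p \cong H^p(\mathbb{T}^\infty)$, writing $g_\sigma = H_\sigma f$ and letting $g_\sigma^\ast \in H^p(\mathbb{T}^\infty)$ be the corresponding boundary function, so that $\widehat{g_\sigma^\ast}(n) = a_n n^{-\sigma}$ for $n \in \mathbb{N}$ and $\widehat{g_\sigma^\ast}(q) = 0$ for $q \in \mathbb{Q}_+ \setminus \mathbb{N}$, with $\|g_\sigma^\ast\|_{L^p} \le M$. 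The goal is to produce a limit of this bounded family along some sequence $\sigma_k \downarrow 0$ that lies in $H^p(\mathbb{T}^\infty)$ and has Fourier coefficients $a_n$; such a limit is precisely $f^\ast$, witnessing $f \in \mathscr{H}^p$. For $1 < p < \infty$ this is straightforward: $L^p(\mathbb{T}^\infty)$ is reflexive, so by weak sequential compactness there is a subsequence with $g_{\sigma_k}^\ast \rightharpoonup F$ weakly for some $F \in L^p(\mathbb{T}^\infty)$. Since $H^p(\mathbb{T}^\infty)$ is a norm-closed subspace it is weakly closed, so $F \in H^p(\mathbb{T}^\infty)$; and testing against the characters (which lie in $L^{p'}$) gives $\widehat{F}(n) = \lim_k a_n n^{-\sigma_k} = a_n$ and $\widehat{F}(q) = 0$ for $q \notin \mathbb{N}$. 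Thus $F = f^\ast$ and $f \in \mathscr{H}^p$.

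The case $p = 1$ is the main obstacle, since $L^1(\mathbb{T}^\infty)$ is not reflexive and a bounded sequence need not have a weakly convergent subsequence with limit in $L^1$. Here I would instead regard $g_\sigma^\ast\, dm_\infty$ as a bounded family in the space of measures $M(\mathbb{T}^\infty) = C(\mathbb{T}^\infty)^\ast$ and extract, via Banach--Alaoglu, a weak-$\ast$ convergent subsequence $g_{\sigma_k}^\ast\,dm_\infty \overset{\ast}{\rightharpoonup} \mu$. The coefficient computation now gives $\widehat{\mu}(n) = a_n$ and $\widehat{\mu}(q) = 0$ for $q \in \mathbb{Q}_+ \setminus \mathbb{N}$, so that $\mu$ is an analytic measure: its spectrum is contained in $\mathbb{N}$, which sits in the nonnegative cone of the ordered dual group $\mathbb{Q}_+ \cong \widehat{\mathbb{T}^\infty}$ under $q \mapsto \log q$. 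The generalized F.~and M.~Riesz theorem for compact abelian groups with ordered dual (see e.g.~\cite{QQ2020}) then forces $\mu$ to be absolutely continuous with respect to $m_\infty$, say $d\mu = f^\ast\, dm_\infty$ with $f^\ast \in H^1(\mathbb{T}^\infty)$ and $\widehat{f^\ast}(n) = a_n$. This yields $f \in \mathscr{H}^1$.

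Combined with the first paragraph, either route completes the proof. The essential input beyond soft functional analysis is the appeal to the F.~and M.~Riesz theorem in the case $p=1$, which is exactly the place where the one-sided (analytic) nature of the spectrum is used and where the connection to $H^p(\mathbb{T}^\infty)$ cannot be dispensed with.
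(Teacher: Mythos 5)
The paper itself contains no proof of this lemma: it is imported wholesale from \cite{BP2021}*{Lemma~3.1}, accompanied only by the remark that no proof avoiding the identification $\mathscr{H}^p \cong H^p(\mathbb{T}^\infty)$ is known. So your argument is not competing with an in-paper proof, and judged on its own merits it is essentially a correct, self-contained proof that (consistently with that remark) runs entirely through the torus. The first part --- $H_\sigma$ is a contraction on Dirichlet polynomials because $\|H_\sigma P\|_p = \sup_{\tau>\sigma} M_p(\tau,P) \leq \|P\|_p$, it extends to the coefficient multiplier $a_n \mapsto a_n n^{-\sigma}$, and $\|g - H_\sigma g\|_p \to 0$ for all $g \in \mathscr{H}^p$ by density --- is standard and fine. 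The case $1<p<\infty$ is also clean: reflexivity of $L^p(\mathbb{T}^\infty)$ gives a weak cluster point, $H^p(\mathbb{T}^\infty)$ is weakly closed (the defining Fourier-coefficient functionals are weakly continuous), and testing against characters identifies the limit as $f^\ast$.

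The one step you must repair is the appeal, for $p=1$, to ``the generalized F.~and M.~Riesz theorem for compact abelian groups with ordered dual.'' For a general ordered dual that theorem (Helson--Lowdenslager; see Rudin, \emph{Fourier Analysis on Groups}, \S 8.2) does \emph{not} conclude absolute continuity: it only says that the singular and absolutely continuous parts of an analytic measure are separately analytic, with the singular part having vanishing zeroth coefficient. Singular analytic measures genuinely exist for non-archimedean orders; for instance, on $\mathbb{T}^2$ with the lexicographic order in which the second coordinate dominates, the measure $d\mu(\theta_1,\theta_2) = d\delta_0(\theta_1)\, e^{i\theta_2}\, dm(\theta_2)$ is singular, yet its spectrum $\mathbb{Z} \times \{1\}$ consists of positive elements. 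What rescues your argument is exactly the feature you mention only in passing: the order on $\mathbb{Q}_+ \cong \widehat{\mathbb{T}^\infty}$ induced by $q \mapsto \log q$ embeds the dual as a subgroup of $\mathbb{R}$, i.e.\ the order is archimedean, and for compact groups whose dual embeds in $\mathbb{R}$ the strong conclusion (analytic implies absolutely continuous) is a theorem of de~Leeuw and Glicksberg (Acta Math.\ 109, 1963); since a spectrum contained in $\mathbb{N}$ lies in $\{q \geq 1\}$, it applies here, and the same statement in the $\mathbb{T}^\infty$ setting appears in Aleman--Olsen--Saksman, \emph{Fatou and brothers Riesz theorems in the infinite-dimensional polydisc}. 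I do not believe \cite{QQ2020} contains this result, so your parenthetical citation should be replaced by one of these. With that reference corrected, your proof is complete, and it is a useful addition precisely because it makes explicit where the one-sided spectrum (and hence the torus machinery) is truly indispensable --- the point the authors emphasize without proof.
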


We are now ready to proceed with the proof of Theorem~\ref{thm:carlson}. We will argue similarly to the final part of the proof of Theorem~\ref{thm:rieszac} to establish (i), but now Lemma~\ref{lem:avgest} will enter the picture. Theorem~\ref{thm:hardy} and Lemma~\ref{lem:Hsigma} are, respectively, required for assertions (ii) and (iii). 
\begin{proof}
	[Proof of Theorem~\ref{thm:carlson}] Fix $\sigma>0$, $N\geq2$, and $k>1$. The assumption \eqref{eq:supsup} and Lemma~\ref{lem:pest} implies that \eqref{eq:RNkff} from the proof of Theorem~\ref{thm:rieszac} holds in our setting, which when used in combination with Minkowski's inequality yields that 
	\begin{multline*}
		\left(\frac{1}{2T}\int_{-T}^T |R_N^kf(s)-f(s)|^p \,dt \right)^{1/p} \\
		\leq \Gamma(k+1) \int_{-\infty}^\infty \left(\frac{1}{2T}\int_{-T}^T \left|f\left(s+\frac{z}{\log{N}}\right)-f(s)\right|^p\,dt\right)^{1/p} \frac{e^x}{|z|^{k+1}}\,\frac{dy}{2\pi}. 
	\end{multline*}
	The inner integral can be estimated using Lemma~\ref{lem:avgest} (where $z$ is $z/\log{N}$). If $T\geq1$ and $N\geq3$ (so that $\log{N}\geq1$), we get 
	\begin{multline*}
		\left(\frac{1}{2T}\int_{-T}^T |R_N^kf(s)-f(s)|^p \,dt \right)^{1/p} \\
		\leq \frac{3 \Gamma(k+1) (C_f)^{1/p}}{ \sigma^2 \log N} \int_{-\infty}^\infty (1+\sigma+|z|)^{1/p+1} \frac{ e^x}{|z|^k}\,\frac{dy}{2\pi}, 
	\end{multline*}
	where as usual
	\[C_f = \sup_{\sigma>0} \sup_{T\geq1} \frac{1}{2T}\int_{-T}^T |f(\sigma+it)|^p \,dt.\]
	For fixed $x>0$ (we warmly recommend $x=k$), the integral on the right-hand side is finite whenever $k-1/p-1>1$, so let us also fix any $k>3$. This demonstrates that there for every $\varepsilon>0$ is a positive integer $N_{\varepsilon}$ such that if $N \geq N_\varepsilon$, then 
	\begin{equation}\label{eq:Tunieps} 
		\sup_{T\geq1}\left(\frac{1}{2T}\int_{-T}^T |R_N^kf(\sigma+it)-f(\sigma+it)|^p \,dt \right)^{1/p} \leq \varepsilon 
	\end{equation}
	for fixed $0<\sigma<\infty$. From this, it is difficult not to see that 
	\begin{equation}\label{eq:ffromRnkf} 
		\lim_{T\to\infty} \frac{1}{2T} \int_{-T}^T |f(\sigma+it)|^p \,dt = \lim_{N\to\infty} \lim_{T\to\infty} \frac{1}{2T} \int_{-T}^T |R_N^kf(\sigma+it)|^p \,dt 
	\end{equation}
	in the sense that the $T$-limit on the left-hand side and the $N$-limit on the right-hand side both exist, are finite, and coincide. This completes the proof of (i). The proof of (ii) follows from \eqref{eq:ffromRnkf} and Theorem~\ref{thm:hardy}, since pointwise limits of decreasing and convex functions are decreasing and convex.
	
	We turn next to (iii) and suppose that $f(s) = \sum_{n\geq1} a_n n^{-s}$. It follows from the estimate \eqref{eq:Tunieps} and Theorem~\ref{thm:hardy} (applied to the Dirichlet polynomial $H_\sigma R_{N_1}^kf-H_\sigma R_{N_2}^k f$) that $(H_\sigma R_N^kf)_{N\geq3}$ is a Cauchy sequence in $\mathscr{H}^p$. Since a function in $H^p(\mathbb{T}^\infty)$ is uniquely determined by its Fourier coefficients, it follows that
	\[(H_\sigma f)^\ast(\chi) = \sum_{n=1}^\infty a_n n^{-\sigma} \chi(n)\]
	is in $H^p(\mathbb{T}^\infty)$ and that $\|(H_\sigma f)^\ast\|_{L^p(\mathbb{T}^\infty)} = M_p(\sigma,f)$ due to (ii). Using the Poisson integral \eqref{eq:poissonTinfty}, we find that $H_\sigma f$ is in $\mathscr{H}^p$ for every $\sigma>0$ and that
	\[\|H_\sigma f\|_p = M_p(\sigma,f).\]
	Using that $C_f<\infty$, it now follows from Lemma~\ref{lem:Hsigma} that $f$ is in $\mathscr{H}^p$ and that
	\[\|f\|_p = \lim_{\sigma \to 0^+} \|H_\sigma f\|_p = \lim_{\sigma \to 0^+} M_p(\sigma,f). \qedhere\]
\end{proof}
\begin{remark}
	It is possible to replace the assumption \eqref{eq:supsup} in Theorem~\ref{thm:carlson} with the weaker assumption 
	\begin{equation}\label{eq:limsup} 
		\sup_{\sigma>0} \limsup_{T\rightarrow\infty} \frac{1}{2T} \int_{-T}^T |f(\sigma+it)|^p < \infty, 
	\end{equation}
	provided $f$ is also assumed to have finite order in $\mathbb{C}_\kappa$ for every $\kappa>0$. The basic idea is that the function
	\[F_T(z) = \frac{1}{2T}\int_{-T}^T \left|\frac{f(z+it)}{z+1}\right|^p\,dt\]
	is subharmonic and of finite order in the strip $S_\kappa =\{z\,:\, \kappa < x < \sigmaa(f)+1\}$ for every fixed $\kappa>0$ and bounded on the boundary lines of the strip. By the Phragm\'{e}n--Lindel\"of principle, there is a constant $C_\kappa>0$ independent of $T \geq1$ such that $|F_T(z)| \leq C_\kappa$. Using this estimate with $y=0$, it follows that $H_\kappa f$ satisfies the assumptions of Theorem~\ref{thm:carlson} for every $\kappa>0$. Since $\|H_\kappa f\|_p$ is bounded by the quantity in \eqref{eq:limsup}, we obtain the conclusion of Theorem~\ref{thm:carlson}. 
\end{remark}

\section{Almost sure properties of vertical limit functions} \label{sec:almostsure} 
A technical result is needed for the proof of Theorem~\ref{thm:ergodicpoisson}. It can be established in various ways, but the cleanest is via Helson's inequality \eqref{eq:helsonineq}, which when formulated on $\mathbb{T}^\infty$ asserts that 
\begin{equation}\label{eq:helsonreform} 
	\left(\sum_{n=1}^\infty \frac{|\widehat{f}(n)|^2}{d(n)}\right)^{1/2} \leq \|f\|_{H^1(\mathbb{T}^\infty)} 
\end{equation}
for every $f$ in $H^1(\mathbb{T}^\infty)$.
\begin{lemma}\label{lem:helsonabs} 
	If $f$ is in $H^1(\mathbb{T}^\infty)$ and if $g$ is defined by $\widehat{g}(n) = \widehat{f}(n) z(n)$ for $z$ in $\mathbb{D}^\infty \cap \ell^2$, then the Fourier series
	\[g(\chi) = \sum_{n=1}^\infty \widehat{g}(n) \chi(n)\]
	converges absolutely on $\mathbb{T}^\infty$. 
\end{lemma}
\begin{proof}
	This follows at once from the Cauchy--Schwarz inequality, \eqref{eq:helsonreform}, and the identity
	\[\sum_{n=1}^\infty d(n) |z(n)|^2 = \prod_{j=1}^\infty \left(\frac{1}{1-|z_j|^2}\right)^2,\]
	since $|\chi(n)|=1$. 
\end{proof}

We will apply Lemma~\ref{lem:helsonabs} with $z = \mathfrak{p}^{-s}$ for $s$ in $\mathbb{C}_{1/2}$ in order to compare the two Poisson integrals \eqref{eq:poissonTinfty} and \eqref{eq:poissonhalfplane}. 
\begin{proof}
	[Proof of Theorem~\ref{thm:ergodicpoisson}] Our first task is to show that function $F$ defined by the Poisson integral
	\[F(\chi,s) = \int_{-\infty}^\infty f^\ast(\chi \mathfrak{p}^{-i\tau}) \, \frac{\sigma}{\sigma^2 + (t-\tau)^2}\,\frac{d\tau}{\pi}\]
	is well-defined for $\chi$ in $E_f$ and $s$ in $\mathbb{C}_0$. We begin by using H\"older's inequality to the effect that
	\[\int_{-\infty}^\infty |f^\ast(\chi \mathfrak{p}^{-i\tau})| \, \frac{\sigma}{\sigma^2 + (t-\tau)^2}\,\frac{d\tau}{\pi} \leq \left(\int_{-\infty}^\infty |f^\ast(\chi \mathfrak{p}^{-i\tau})|^p \, \frac{\sigma}{\sigma^2 + (t-\tau)^2}\,\frac{d\tau}{\pi}\right)^{\frac{1}{p}}.\]
	Suppose that $\chi$ is in $E_f$ and let $C=C_f(\chi)$ denote the constant from the definition of $E_f$. Using integration by parts, we find that 
	\begin{equation}\label{eq:pest1} 
		\int_{-\infty}^\infty |f^\ast(\chi \mathfrak{p}^{-i\tau})|^p \, \frac{\sigma}{\sigma^2 + (t-\tau)^2}\,\frac{d\tau}{\pi} \leq 2C \int_{-\infty}^\infty \frac{(1+|\tau|) \sigma |t-\tau|}{\left(\sigma^2 + (t-\tau)^2\right)^2}\,\frac{d\tau}{\pi}. 
	\end{equation}
	Using that $|\tau| \leq |t|+|t-\tau|$ and computing the resulting integrals, we obtain the estimate 
	\begin{equation}\label{eq:pest2} 
		\int_{-\infty}^\infty \frac{(1+|\tau|) \sigma |t-\tau|}{\left(\sigma^2 + (t-\tau)^2\right)^2}\,\frac{d\tau}{\pi} \leq \frac{1+|t|}{\pi \sigma} + \frac{1}{2}. 
	\end{equation}
	We infer from \eqref{eq:pest1} and \eqref{eq:pest2} that $F$ is well-defined for $\chi$ in $E_f$ and $s$ in $\mathbb{C}_0$. 
	
	Let us now fix $s$ in $\mathbb{C}_0$. Since $E_f$ has full measure in $\mathbb{T}^\infty$, we may think of $F(\cdot,s)$ as a function defined almost everywhere on $\mathbb{T}^\infty$. Our next task is to show that $F(\cdot,s)$ is in $L^1(\mathbb{T}^\infty)$. We use H\"older's inequality twice and Tonelli's theorem with the rotational invariance of $m_\infty$ to infer that 
	\begin{multline*}
		\int_{\mathbb{T}^\infty} |F(\chi,s)| \,dm_\infty(\chi) \\
		\leq \int_{-\infty}^\infty \left(\int_{\mathbb{T}^\infty} |f^\ast(\chi \mathfrak{p}^{-i\tau})|^p \,dm_\infty(\chi)\right)^{\frac{1}{p}} \frac{\sigma}{\sigma^2 + (\tau-t)^2}\,\frac{d\tau}{\pi} = \|f\|_p. 
	\end{multline*}
	Since $q^{-s}$ for $q \geq 1$ and $q^{s}$ for $0<q<1$ are bounded analytic functions in $\mathbb{C}_0$, we get that
	\[\int_{-\infty}^\infty \frac{q^{-i\tau}}{\sigma^2+(t-\tau)^2}\,\frac{d\tau}{\pi} = \frac{q^{-it}}{\left(\max(q,1/q)\right)^\sigma}\]
	for $q>0$. When used in conjunction with Fubini's theorem, this allows us to compute the Fourier coefficients
	\[\widehat{F}(q,s) = \int_{\mathbb{T}^\infty} F(\chi,s) \,\overline{q(\chi)}\,dm_\infty(\chi) = \widehat{f^\ast}(q) \frac{q^{-it}}{\left(\max(q,1/q)\right)^\sigma}\]
	for $q$ in $\mathbb{Q}_+$. Since $f$ is in $\mathscr{H}^p$ by assumption, we know that $\widehat{f^\ast}(q)=0$ whenever $q$ is not an integer. We therefore get that
	\[F(\chi,s) = \sum_{n=1}^\infty \widehat{f^\ast}(n) n^{-s} \chi(n)\]
	as the Fourier series of a function in $H^1(\mathbb{T}^\infty)$. 
	
	If $s$ is in $\mathbb{C}_{1/2}$, then it follows from Lemma~\ref{lem:helsonabs} that this Fourier series is absolutely convergent on $\mathbb{T}^\infty$. If $\chi$ is in $E_f$ and $s$ is in $\mathbb{C}_{1/2}$, then we get that $F(s,\chi)=f_\chi(s)$ as absolutely convergent series. This completes the proof of the first assertion, since the Poisson integral $F(\chi,s)$ is well-defined in $\mathbb{C}_0$ and analytic in $\mathbb{C}_{1/2}$, so it must be analytic in $\mathbb{C}_0$.
	
	It remains to establish the estimate \eqref{eq:supsupchi} and in view of what we have done, we write $f_\chi(s)=F(\chi, s)$ for $\chi$ in $E_f$ and $s$ in $\mathbb{C}_0$. Using H\"older's inequality and Tonelli's theorem as before, we obtain
	\[\frac{1}{2T} \int_{-T}^T |f_\chi(\sigma+it)|^p \,dt \leq \int_{-\infty}^\infty |f^\ast(\chi \mathfrak{p}^{-i\tau})|^p \frac{1}{2T} \int_{-T}^T \frac{\sigma}{\sigma^2+(t-\tau)^2}\,dt \,\frac{d\tau}{\pi}.\]
	The estimate
	\[\frac{\mathbf{1}_{[-T,T]}(t)}{2T} \leq \frac{T}{T^2+t^2}\]
	holds for every real number $t$, so we get from the Poisson integral of the Poisson kernel that
	\[\frac{1}{2T} \int_{-T}^T \frac{\sigma}{\sigma^2+(t-\tau)^2}\,dt \leq \int_{-\infty}^\infty \frac{T}{T^2+t^2} \frac{\sigma}{\sigma^2+(t-\tau)^2}\,dt = \pi \frac{T+\sigma}{(T+\sigma)^2 + \tau^2}.\]
	Let $C=C_f(\chi)$ be the constant defined above. Using integration by parts as before, we find in this case that
	\[\frac{1}{2T} \int_{-T}^T |f_\chi(\sigma+it)|^p \,dt \leq 2C \int_{-\infty}^\infty \frac{(1+|\tau|)(T+\sigma)|\tau|}{\left((T+\sigma)^2 + \tau^2\right)^2}\,d\tau = C \left(\frac{2}{T+\sigma} + \pi\right).\]
	Since $T\geq1$ and $\sigma>0$, we obtain the stated result using that $2+\pi \leq 6$. 
\end{proof}

\begin{remark}
	The combination of \eqref{eq:pest1} and \eqref{eq:pest2} from the first part of the proof of Theorem~\ref{thm:ergodicpoisson} supplies the estimate
	\[|f_\chi(s)|^p \leq 2C_f(\chi) \left(\frac{1+|t|}{\pi \sigma} + \frac{1}{2}\right) \leq\sqrt{2}C_f(\chi)\frac{1+|s|}{\sigma}.\]
	The corresponding result for $p=2$ was obtained by Hedenmalm, Lindqvist, and Seip~\cite{HLS1997}*{Theorem~4.2} using the ergodic theorem in a similar manner. The same estimate (with $\sqrt{2}$ replaced by $12$) can also be obtained from the second part of Theorem~\ref{thm:ergodicpoisson} and Lemma~\ref{lem:pest}.
\end{remark}

\begin{proof}
	[Proof of Theorem~\ref{thm:fatou}] Let us define
	\[E = \left\{\chi \in E_f \,:\, \lim_{\sigma \to 0^+} f_\chi(\sigma) = f^\ast(\chi)\right\}.\]
	This set is measurable since $f^\ast$ is measurable and since $\chi \mapsto f_\chi(\sigma)$ is measurable for each fixed $\sigma$. (Recall from the proof of Theorem~\ref{thm:ergodicpoisson} that the function $f_\chi(\sigma) = F(\chi,\sigma)$ is in $H^1(\mathbb{T}^\infty)$.) We will now apply the ergodic theorem for the Kronecker flow to the indicator function $\mathbf{1}_E$, which is plainly integrable. We infer that there is a subset $F$ of $\mathbb{T}^\infty$ of full measure such that if $\chi$ is in $F$, then 
	\begin{equation}\label{eq:minftyE} 
		m_\infty(E) = \lim_{T\to\infty} \frac{1}{2T} \int_{-T}^T \mathbf{1}_E(\chi \mathfrak{p}^{-i\tau})\,d\tau. 
	\end{equation}
	However, if $\chi$ is in $E_f$, then it follows from the first assertion in Theorem~\ref{thm:ergodicpoisson} and the classical Fatou's theorem \eqref{eq:fatouclassical} that
	\[\lim_{\sigma \to 0^+} f_\chi(\sigma \mathfrak{p^{-i\tau}}) = f^\ast(\chi \mathfrak{p}^{-i\tau})\]
	for almost every $\tau$ in $\mathbb{R}$. This means that if $\chi$ is in $E_f$, then the integral on the right-hand side of \eqref{eq:minftyE} equals $2T$ for every $T>0$. Since both $F$ and $E_f$ have full measure, their intersection is nonempty. Choosing any $\chi$ belonging to $F \cap E_f$ in \eqref{eq:minftyE}, we find that $m_\infty(E)=1$. 
\end{proof}

\bibliography{cvlf}

\end{document}